\numberwithin{equation}{section}
\numberwithin{figure}{section}
\theoremstyle{plain}
\newtheorem{thm}{\protect\theoremname}
  \theoremstyle{plain}
  \newtheorem{lem}[thm]{\protect\lemmaname}
  \providecommand{\lemmaname}{Lemma}
\providecommand{\theoremname}{Theorem}
  \providecommand{\lemmaname}{Lemma}
\providecommand{\theoremname}{Theorem}
  \providecommand{\lemmaname}{Lemma}
\providecommand{\theoremname}{Theorem}
\providecommand{\lemmaname}{Lemma}
\providecommand{\theoremname}{Theorem}
  \providecommand{\lemmaname}{Lemma}
\providecommand{\theoremname}{Theorem}
\begin{document}

\title{Zeros of polynomials with four-term recurrence}

\author{Khang Tran \and Andres Zumba}
\begin{abstract}
For any real numbers $b,c\in\mathbb{R}$, we form the sequence of
polynomials $\left\{ H_{m}(z)\right\} _{m=0}^{\infty}$ satisfying
the four-term recurrence 
\[
H_{m}(z)+cH_{m-1}(z)+bH_{m-2}(z)+zH_{m-3}(z)=0,\qquad m\ge3,
\]
with the initial conditions $H_{0}(z)=1$, $H_{1}(z)=$$-c$, and
$H_{2}(z)=-b+c^{2}$. We find necessary and sufficient conditions
on $b$ and $c$ under which the zeros of $H_{m}(z)$ are real for
all $m$, and provide an explicit real interval on which ${\displaystyle \bigcup_{m=0}^{\infty}\mathcal{Z}(H_{m})}$
is dense where $\mathcal{Z}(H_{m})$ is the set of zeros of $H_{m}(z)$. 
\end{abstract}

\maketitle

\section{Introduction}

Consider the sequence of polynomials $\left\{ H_{m}(z)\right\} _{m=0}^{\infty}$
satisfying a finite recurrence 
\begin{equation}
\sum_{k=0}^{n}a_{k}(z)H_{m-k}(z)=0,\qquad m\ge n,\label{eq:generalrecurrence}
\end{equation}
where $a_{k}(z)$, $1\le k\le n$, are complex polynomials. With certain
initial conditions, one may ask for the locations of the zeros of
$H_{m}(z)$ on the complex plane. There are two common approaches:
asymptotic location and exact location. The first direction asks for
a curve where the zeros of $H_{m}(z)$ approach as $m\rightarrow\infty$,
and the second direction aims at a curve where the zeros of $H_{m}(z)$
lie exactly on for all $m$ (or at least for all large $m$). Recent
works in the first direction include \cite{bkw,bkw-1,bbs,bg,bg-1}.
Results in this first direction are useful to prove the necessary
condition for the reality of zeros of $H_{m}(z)$ that we will see
in Section 3. 

It is not an easy problem to find an explicit curve (if such exists)
where the zeros of all $H_{m}(z)$ lie for a general recurrence in
\eqref{eq:generalrecurrence}. For three-term recurrences with degree
two and appropriate initial conditions, the curve containing zeros
is given in \cite{tran}. The corresponding curve for a three-term
recurrence with degree $n$ is given in \cite{tran-1}. Among all
possible curves containing the zeros of the $H_{m}(z)$s, the real
line plays an important role. We say that a polynomial is hyperbolic
if all of its zeros are real. There are a lot of recent works on hyperbolic
polynomials and on linear operators preserving hyperbolicity of polynomials,
see for example \cite{bb,fp}. In some cases, we could find the curve
on the plane containing the zeros of a sequence of polynomials by
claiming certain polynomials are hyperbolic, see for example \cite{ft-1}.

The main result of this paper is the identification of necessary and
sufficient conditions on $b,c\in\mathbb{R}$ under which the zeros
of the sequence of polynomials $H_{m}(z)$ satisfying the recurrence
\begin{equation}
\begin{aligned}H_{m}(z)+cH_{m-1}(z)+bH_{m-2}(z)+zH_{m-3}(z) & =0,\qquad m\ge3,\\
H_{0}(z) & \equiv1,\\
H_{1}(z) & \equiv-c,\\
H_{2}(z) & \equiv-b+c^{2},
\end{aligned}
\label{eq:recurrence}
\end{equation}
are real. We use the convention that the zeros of the constant zero
polynomial are real. Let $\mathcal{Z}(H_{m})$ denote the set of zeros
of $H_{m}(z)$. 
\begin{thm}
\label{thm:mainstatement}Suppose $b,c\in\mathbb{R}$, and let $\left\{ H_{m}(z)\right\} _{m=0}^{\infty}$
be defined as in \eqref{eq:recurrence}. The zeros of $H_{m}(z)$
are real for all $m$ if and only if one of the two conditions below
holds

(i) $c=0$ and $b\ge0$

(ii) $c\ne0$ and $-1\le b/c^{2}\le1/3$. \\
 Moreover, in the first case with $b>0$, ${\displaystyle \bigcup_{m=0}^{\infty}\mathcal{Z}(H_{m})}$
is dense on $(-\infty,\infty)$. In the second case, ${\displaystyle \bigcup_{m=0}^{\infty}\mathcal{Z}(H_{m})}$
is dense on the interval 
\[
c^{3}.\left(-\infty,\frac{-2+9b/c^{2}-2\sqrt{(1-3b/c^{2})^{3}}}{27}\right].
\]
\end{thm}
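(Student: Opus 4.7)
The plan is to work via the generating function
\[
\sum_{m\ge 0} H_m(z)\,t^m \;=\; \frac{1}{1 + ct + bt^2 + zt^3},
\]
which follows from multiplying both sides by $1+ct+bt^2+zt^3$ and using the recurrence together with the prescribed initial data (the values $H_0=1,\,H_1=-c,\,H_2=-b+c^2$ are chosen exactly so the numerator reduces to $1$). The case $c=0$ I would handle separately: a quick induction gives the symmetry $H_m(-z)=(-1)^m H_m(z)$, reducing reality to a polynomial in $z^2$, and the factorization $1+bt^2+zt^3=(1+at)(1-at+(a^2+b)t^2)$ together with a trigonometric parametrization of the quadratic factor yields the density and reality statements on $(-\infty,\infty)$ for $b>0$ by the same mechanism as below. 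For $c\ne 0$ I would rescale via $z=c^3w$ and $H_m = c^m g_m$, so the problem reduces to the normalized recurrence with $\beta := b/c^2 \in [-1,1/3]$ and density interval $(-\infty, w^*]$, where $w^* = (-2+9\beta-2(1-3\beta)^{3/2})/27$.

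The main idea is to parametrize $w$ by a real parameter $a$ through the factorization
\[
1+t+\beta t^2 + wt^3 \;=\; (1+at)(1-2xt+yt^2), \qquad 2x = a-1,\ \ y = a^2-a+\beta,
\]
so that $w = a^3 - a^2 + \beta a$. A short calculation shows that for $\beta \in [-1, 1/3]$ the map $a \mapsto w$ sends $(-\infty, a_-]$, where $a_- := (1-2\sqrt{1-3\beta})/3$, monotonically and bijectively onto the density interval $(-\infty, w^*]$; on this range $y > x^2$ strictly (except at the endpoint), the quadratic factor has complex-conjugate roots $e^{\mp i\phi}/\sqrt{y}$ with $\cos\phi = x/\sqrt{y}$, and $\phi$ traverses $[2\pi/3,\pi]$ monotonically as $a$ varies. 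Partial-fraction decomposition then gives a closed form for $g_m(w)$ as a combination of $(-a)^{m+2}\sin\phi$, $y^{(m+1)/2}\sin((m+2)\phi)$, and $y^{(m+2)/2}\sin((m+1)\phi)$. Combining the two oscillatory pieces via the identity $a\sin((m+2)\phi) + \sqrt{y}\sin((m+1)\phi) = \sqrt{D}\,\sin((m+1)\phi+\psi)$, where $D := a^2 + 2a\sqrt{y}\cos\phi + y$ and $\tan\psi = a\sin\phi/(a\cos\phi+\sqrt{y})$, the vanishing condition $g_m(w) = 0$ collapses to the clean transcendental equation
\[
\sigma^{m+1}\sin\bigl((m+1)\phi + \psi\bigr) \;=\; (-1)^{m+1}\sin\psi, \qquad \sigma := \sqrt{y}/a.
\]

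Counting real solutions is the decisive step. A direct computation gives $\sigma^2 = 1 + (\beta - a)/a^2$, so on the density parameter range one has $a \le a_- \le \beta$, hence $|\sigma| \ge 1$ (with equality only at the endpoints). Consequently the oscillatory left-hand side dominates the bounded right-hand side away from the zeros of $\sin((m+1)\phi+\psi)$. Since $(m+1)\phi + \psi(\phi)$ is smooth and (for $m$ large) strictly monotone in $\phi$, sweeping an interval of length $\approx (m+1)\pi/3$, an intermediate-value argument applied between successive zeros of $\sin((m+1)\phi + \psi)$ produces exactly $\lfloor m/3 \rfloor = \deg g_m$ solutions, each giving a distinct real $w \le w^*$. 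This exhausts the zero set of $g_m$ and proves sufficiency, and density of $\bigcup_m \mathcal{Z}(H_m)$ on the stated interval follows at once, since the $\phi$-solutions become uniformly dense in $[2\pi/3, \pi]$ as $m \to \infty$ and $\phi \mapsto w$ is a homeomorphism.

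For the necessary direction, when $(b, c)$ lies outside the stated region I would appeal to the asymptotic zero-distribution results of \cite{bkw, bbs, bg}: the limiting support of the (rescaled) zero sets contains non-real points, forcing non-real zeros for infinitely many $m$. The main technical hurdle I expect is making the sign-change count exact rather than only asymptotic: one must handle the sign change of $\sigma$ at $a=0$ (which lies inside the parameter interval when $\beta > 1/4$) and the degeneracy of the formulas at the endpoint $\phi = \pi$ (where $\sin\phi$ vanishes) carefully enough to guarantee the count is precisely $\lfloor m/3 \rfloor$ for every $m$, not just asymptotically.
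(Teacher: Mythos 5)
Your sufficiency argument is, up to a change of variables, the same as the paper's: you parametrize the cubic by the location $-1/a$ of its real root and extract the angle $\phi$ of the conjugate pair, whereas the paper parametrizes by the angle $\theta$ directly and recovers the real root through a quadratic for $\zeta(\theta)$; your $|\sigma|\ge 1$ (equivalently $a\le a_-\le\beta\iff\beta\ge-1$) is exactly the paper's Lemma asserting $|\zeta|>1$, your oscillatory equation $\sigma^{m+1}\sin((m+1)\phi+\psi)=(-1)^{m+1}\sin\psi$ plays the role of the paper's $g_m(\theta)=0$, and the sign-alternation count against $\deg H_m\le\lfloor m/3\rfloor$ is identical in spirit. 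Your parametrization is arguably cleaner (the explicit cubic $w=a^3-a^2+\beta a$ and the endpoint computation $w(a_-)=w^*$ are nice), but be aware that the two difficulties you flag at the end are not cosmetic: the paper needs two separate lemmas to recover the one zero you lose to the singularity at $a=0$ (equivalently the vertical asymptote of $\zeta$ when $1/4<\beta\le1/3$) and to pin down the sign at the endpoint $\phi=\pi$; without both, the count is $\lfloor m/3\rfloor-1$ and the argument does not close. One also needs a small continuity (Hurwitz-type) lemma to pass to the excluded boundary values of $\beta$, which the paper supplies and you do not mention. Small slip: the sign changes come from evaluating at the extrema of $\sin((m+1)\phi+\psi)$, where the left side is $\pm\sigma^{m+1}$ and dominates the bounded right side, not ``between successive zeros'' of that sine.

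The genuine gap is the necessity direction. Saying that for $\beta\notin[-1,1/3]$ ``the limiting support of the zero sets contains non-real points'' is precisely the statement that has to be proved, and the cited asymptotic results do not hand it to you; they tell you that limit points of zeros occur where the two smallest-modulus roots of $1+t+\beta t^2+wt^3$ have equal modulus, but you must still exhibit a non-real $w^*$ at which this equimodularity holds (with distinct roots, and with a nondegeneracy condition so that Sokal's theorem applies). The paper does this constructively: for $\beta<-1$ it takes $\theta^*$ slightly to the left of $\pi/2$, where $\zeta(\theta^*)$ becomes non-real with $|\zeta|>1$, and shows $z(\theta^*)\notin\mathbb{R}$ by a short contradiction argument; for $\beta>1/3$ it takes $\cos\theta^*$ slightly above $\sqrt{\beta/(4\beta-1)}$, where the discriminant $(1-4\beta)\cos^2\theta^*+\beta$ turns negative. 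Some construction of this kind (or an equivalent argument that the equimodular curve is not contained in $\mathbb{R}$) is indispensable, and your proposal contains no trace of it; as written, the ``only if'' half of the theorem is asserted rather than proved. The same remark applies to the case $c=0$, $b<0$, which the paper dispatches by the substitution $t\mapsto it$ showing the zeros are purely imaginary.
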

Our paper is organized as follow. In Section 2, we prove the sufficient
condition for the reality of the zeros of all $H_{m}(z)$ in the case
$c\ne0$. The case $c=0$ follows from similar arguments whose key
differences will be mentioned in Section 3. Finally, in Section 4,
we prove the necessary condition for the reality of the zeros of $H_{m}(z)$.

\section{The case $c\protect\ne0$ and $-1\le b/c^{2}\le1/3$}

We write the sequence $H_{m}(z)$ in \eqref{eq:recurrence} using
its generating function 
\begin{equation}
\sum_{m=0}^{\infty}H_{m}(z)t^{m}=\frac{1}{1+ct+bt^{2}+zt^{3}}.\label{eq:genfuncgeneral}
\end{equation}
From \eqref{eq:genfuncgeneral}, if we make the substitutions $t\rightarrow t/c$,
$b/c^{2}\rightarrow a$, and $z/c^{3}\rightarrow z$, it suffices
to prove the following form of the theorem. 
\begin{thm}
\label{thm:cneq0}Consider the sequence of polynomials $\left\{ H_{m}(z)\right\} _{m=0}^{\infty}$
generated by 
\begin{equation}
\sum_{m=0}^{\infty}H_{m}(z)t^{m}=\frac{1}{1+t+at^{2}+zt^{3}}\label{eq:genfunc}
\end{equation}
where $a\in\mathbb{R}.$ If $-1\le a\le1/3$ then the zeros of $H_{m}(z)$
lie on the real interval 
\begin{equation}
I_{a}=\left(-\infty,\frac{-2+9a-2\sqrt{(1-3a)^{3}}}{27}\right]\label{eq:intervaldef}
\end{equation}
and ${\displaystyle \bigcup_{m=0}^{\infty}\mathcal{Z}(H_{m})}$ is
dense on $I_{a}$.
\end{thm}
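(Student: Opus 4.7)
The plan is to exploit \eqref{eq:genfunc} via partial fractions in $t$. Factoring $1+t+at^2+zt^3 = z(t-t_1)(t-t_2)(t-t_3)$ and expanding each $1/(t-t_i)$ geometrically gives
\[
H_m(z) \;=\; -\frac{1}{z}\sum_{i=1}^{3}\frac{1}{t_i^{\,m+1}\prod_{j\neq i}(t_i - t_j)}.
\]
For $z$ on the interior of $I_a$ I expect one real root $t_3$ and a complex conjugate pair, so I parametrize $t_1 = re^{i\theta}$, $t_2 = re^{-i\theta}$ with $r>0$, $\theta\in(0,\pi)$. Matching Vieta's relations yields
\[
z \;=\; \frac{r+2\cos\theta}{r^{3}}, \qquad t_3 \;=\; -\frac{r}{r+2\cos\theta}, \qquad ar^{2} + 2r\cos\theta + (4\cos^{2}\theta - 1) \;=\; 0,
\]
the last equation a quadratic in $r$ whose positive solution determines the branch $(r(\theta),z(\theta),t_3(\theta))$.

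Combining the two conjugate summands via $e^{ik\theta}-e^{-ik\theta} = 2i\sin(k\theta)$ collapses $H_m$ to
\[
H_m(z(\theta)) \;=\; \frac{\Phi_m(\theta)}{z\,P\,r^{m+2}\,t_3^{\,m+1}\,\sin\theta},
\]
where $P = (t_3 - re^{i\theta})(t_3 - re^{-i\theta}) > 0$ and
\[
\Phi_m(\theta) \;=\; t_3^{\,m+1}\bigl(r\sin((m+2)\theta) - t_3\sin((m+1)\theta)\bigr) \;-\; r^{\,m+2}\sin\theta.
\]
Since the prefactor is nonvanishing on the admissible $\theta$-range, zeros of $H_m$ on $I_a$ correspond precisely to zeros of $\Phi_m$. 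I would count these via an intermediate value argument: at test points $\theta_k$ spaced by $\pi/(m+2)$ (or a similar arithmetic progression) the dominant sinusoid forces $\Phi_m$ to alternate in sign, producing at least $\lfloor m/3\rfloor = \deg H_m$ sign changes and hence exhausting the zeros.

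The endpoint of $I_a$ arises from the boundary of the parameter domain, where the conjugate pair collides into a real double root. This collision is characterized by $1 + t + at^2 + zt^3 = 0$ simultaneous with its $t$-derivative $1 + 2at + 3zt^2 = 0$; eliminating $z$ gives $at^2 + 2t + 3 = 0$, and substituting back recovers the stated $z = (-2 + 9a - 2\sqrt{(1-3a)^3})/27$ as the maximal $z$ on the admissible branch. As $\theta$ sweeps its allowed range, $z(\theta)$ covers all of $I_a = (-\infty, z_{\max}]$. Density of $\bigcup_m \mathcal{Z}(H_m)$ then follows because for each interior $\theta_*$ the zero equation $\Phi_m(\theta) = 0$ admits a solution $\theta$ within $O(1/m)$ of $\theta_*$ for infinitely many $m$, and $z(\theta)$ is continuous.

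The main obstacle is the sign-change count for $\Phi_m$. Its three summands have growth rates $r\,t_3^{\,m+1}$, $t_3^{\,m+2}$, and $r^{\,m+2}$, so which term dominates at the chosen $\theta_k$ depends on the comparison of $|t_3|$ with $r$, a comparison that may shift across the $(a,\theta)$-range. Verifying that the oscillation of $\sin((m+k)\theta)$ survives this interplay, uniformly for $-1 \le a \le 1/3$, is the heart of the argument. The boundary values $a \in \{-1, 0, 1/3\}$, where the quadratic determining $r$ degenerates or its discriminant vanishes, would likely require separate short treatments.
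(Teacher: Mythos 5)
Your strategy coincides with the paper's: partial fractions for the generating function, parametrization of the roots of the cubic by a conjugate pair $re^{\pm i\theta}$ and a real root $t_3$, a trigonometric function whose zeros in $\theta$ correspond to zeros of $H_m$ on $I_a$, and a sign-change count matched against the degree bound $\lfloor m/3\rfloor$. Indeed your $\Phi_m$ equals, up to the nonvanishing factor $-r^{m+2}$ and the substitution $\zeta=t_3/r$, the paper's $g_m(\theta)\,\zeta^{m+1}\sin\theta$. However, the step you defer as ``the heart of the argument'' is a genuine gap, not a verification: everything hinges on the inequality $|t_3|>r$ (equivalently $|\zeta|>1$), which the paper proves in Lemma~\ref{lem:outsidedisk} by showing $f(-1)f(1)<0$ for the quadratic $f$ determining $\zeta$ and selecting the larger root. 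With that inequality, evaluating at $\theta_h=h\pi/(m+1)$ annihilates the sinusoidal terms and leaves $\mp1+\zeta^{-(m+1)}$, whose sign is forced; without it, your branch of $ar^2+2r\cos\theta+(4\cos^2\theta-1)=0$ is not even well defined for $a\in(0,1/3]$ (both roots are then positive), and the signs of $\Phi_m$ at the test points are undetermined.

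There are three further gaps. First, an off-by-one: the test points $h\pi/(m+1)$ lying in the correct range $(2\pi/3,\pi)$ number exactly $\lfloor m/3\rfloor$, so alternation alone produces only $\lfloor m/3\rfloor-1$ zeros; the paper recovers the missing zero from the sign of $g_m(\pi^-)$ when $a<1/4$ (Lemma~\ref{lem:signend}) and, when $1/4<a\le1/3$, from a two-zero count on the subinterval containing the vertical asymptote of $\zeta(\theta)$ at $\cos\theta=-1/(2\sqrt a)$ (Lemma~\ref{lem:singinterval}). Your proposal does not register that $r(\theta)$ blows up inside the parameter range for $a>1/4$, which both breaks the continuity of $\Phi_m$ and supplies the extra zero. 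Second, to convert $\lfloor m/3\rfloor$ zeros of $\Phi_m$ into $\lfloor m/3\rfloor$ \emph{distinct} zeros of $H_m$ you need $z(\theta)$ to be strictly monotone on $(2\pi/3,\pi)$, which requires a separate computation (Lemma~\ref{lem:zmonotone}); ``$z(\theta)$ covers all of $I_a$'' does not suffice. Third, the degenerate parameter values ($a=0$, $a=1/4$, the endpoints $a=\pm$ extremes, and the $\theta_h$ hitting the asymptote) are most cleanly discharged by a continuity-in-$a$ reduction to a dense set of $a$, as in Lemma~\ref{lem:densesubint}; your plan to treat them by ``separate short treatments'' would otherwise have to confront the degenerate quadratic directly.
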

We will see later that the density of the union of zeros on $I_{a}$
follows naturally from the proof that $\mathcal{Z}(H_{m})\subset I_{a}$
and thus we focus on proving this claim. We note that each value of
$a\in[-1,1/3]$ generates a sequence of polynomials $H_{m}(z,a)$.
The lemma below asserts that it suffices to prove that $\mathcal{Z}(H_{m}(z,a))\subset I_{a}$
for all $a$ in a dense subset of $[-1,1/3]$. 
\begin{lem}
\label{lem:densesubint}Let $S$ be a dense subset of $[-1,1/3]$,
and let $m\in\mathbb{N}$ be fixed. If
\[
\mathcal{Z}(H_{m}(z,a))\in I_{a}
\]
for all $a\in S$ then 
\[
\mathcal{Z}(H_{m}(z,a^{*}))\in I_{a^{*}}
\]
for all $a^{*}\in[-1,1/3]$.
\end{lem}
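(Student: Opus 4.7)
The plan is a standard continuity-in-parameter argument. Fix $m$ and $a^{*}\in[-1,1/3]$, and, using density of $S$, choose a sequence $\{a_{n}\}\subset S$ with $a_{n}\to a^{*}$. I will show that every zero of $H_{m}(z,a^{*})$ is a limit of zeros of $H_{m}(z,a_{n})$, each of which lies in $I_{a_{n}}$ by hypothesis; the continuity of the endpoint function of $I_{a}$ will then place the limit inside $I_{a^{*}}$.

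Two preliminary facts do the work. First, by induction on $m$ via the recurrence $H_{m}=-H_{m-1}-aH_{m-2}-zH_{m-3}$, the polynomial $H_{m}(z,a)$ is jointly polynomial in $z$ and $a$, so its coefficients in $z$ depend polynomially, hence continuously, on $a$. Second, iterating the recurrence shows that the leading $z$-coefficient of $H_{m}(z,a)$ is one of $\pm1$ or $\pm(1-a)$, depending on $m\bmod3$. Since $1-a\ge2/3>0$ on $[-1,1/3]$, the leading coefficient is bounded away from $0$ on this interval, and in particular the degree $d_{m}:=\deg_{z}H_{m}(z,a)$ is constant in $a$ on $[-1,1/3]$.

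With constant degree and continuously varying coefficients, the classical continuity-of-roots theorem gives that, up to relabeling, the zeros of $H_{m}(\cdot,a_{n})$ converge to the zeros of $H_{m}(\cdot,a^{*})$. Hence every $\zeta^{*}\in\mathcal{Z}(H_{m}(\cdot,a^{*}))$ is a limit $\lim_{n\to\infty}\zeta_{n}$ with $\zeta_{n}\in\mathcal{Z}(H_{m}(\cdot,a_{n}))\subset I_{a_{n}}\subset\mathbb{R}$, so $\zeta^{*}\in\mathbb{R}$. Writing $I_{a}=(-\infty,f(a)]$ with
\[
f(a)=\frac{-2+9a-2\sqrt{(1-3a)^{3}}}{27},
\]
the function $f$ is continuous on $[-1,1/3]$ since $(1-3a)^{3}\ge0$ there. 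Passing to the limit in $\zeta_{n}\le f(a_{n})$ yields $\zeta^{*}\le f(a^{*})$, i.e., $\zeta^{*}\in I_{a^{*}}$, which proves the lemma.

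There is no real obstacle in this argument; the only non-cosmetic check is that the degree $d_{m}$ does not drop anywhere in the parameter interval, and this follows at once from the explicit form of the leading $z$-coefficient noted above. All remaining ingredients (joint polynomial dependence, continuity of roots, continuity of $f$) are standard.
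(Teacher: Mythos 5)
Your strategy---tracking the roots themselves via continuity of roots in the parameter $a$---is sound in outline and genuinely different from the paper's proof. The paper argues by contraposition: for a fixed $z^{*}\notin I_{a^{*}}$ it bounds $|H_{m}(z^{*},a_{n})|$ from below by a constant independent of $n$, using the factorization of $H_{m}(\cdot,a_{n})$ over its roots (all of which lie in $I_{a_{n}}$, hence at a definite distance from $z^{*}$ once $n$ is large) together with the degree bound $\lfloor m/3\rfloor$, and then lets $n\to\infty$ using the fact that $H_{m}(z^{*},a)$ is a polynomial in $a$. Your route proves more (the roots actually converge) but at the price of a non-degeneracy hypothesis on the leading coefficient, and that is exactly where your write-up contains a concrete false claim.

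The leading $z$-coefficient of $H_{m}(z,a)$ is \emph{not} ``one of $\pm1$ or $\pm(1-a)$'': already $H_{3}=-1+2a-z$, $H_{4}=1-3a+a^{2}+2z$, and $H_{5}=-1+4a-3a^{2}+(2a-3)z$, with leading coefficients $-1$, $2$, and $2a-3$. In general, writing $m=3k+r$ with $r\in\{0,1,2\}$ and expanding $(1+t+at^{2}+zt^{3})^{-1}=\sum_{j}(-1)^{j}(t+at^{2}+zt^{3})^{j}$, the coefficient of $z^{k}$ in $H_{m}$ is $(-1)^{k}$, $(-1)^{k+1}(k+1)$, and $(-1)^{k}(k+1)\bigl(\tfrac{k+2}{2}-a\bigr)$ in the three cases. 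These are all nonvanishing (indeed bounded away from zero) on $[-1,1/3]$, so the conclusion you want---constant degree $\lfloor m/3\rfloor$---is true, but the justification you give is wrong, and you yourself flagged it as the one non-cosmetic step. Two remarks on repairing it: (a) the direction of root-continuity you actually use (every zero of the limit polynomial is a limit of zeros of the approximants) is Hurwitz's theorem and needs only $H_{m}(\cdot,a^{*})\not\equiv0$, not degree constancy, which is required only to rule out roots escaping to infinity---irrelevant for your argument; and (b) if $H_{m}(\cdot,a^{*})\equiv0$ the conclusion is vacuous under the paper's convention. With either the corrected leading coefficients or the Hurwitz shortcut, your proof goes through.
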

\begin{proof}
Let $a^{*}\in[-1,1/3]$ be given. By the density of $S$ in $[-1,1/3]$,
we can find a sequence $\{a_{n}\}$ in $S$ such that $a_{n}\rightarrow a^{*}$.
For any $z^{*}\notin I_{a^{*}}$, we will show that $H_{m}(z^{*},a^{*})\ne0$.
We note that the zeros of $H_{m}(z,a_{n})$ lie on the interval $I_{a_{n}}$
whose right endpoint approaches the right endpoint of $I_{a^{*}}$
as $n\rightarrow\infty$. If we let $z_{k}^{(n)}$, $1\le k\le\deg H_{m}(z,a_{n})$,
be the zeros of $H_{m}(z,a_{n})$ then 
\[
\left|H_{m}(z^{*},a_{n})\right|=\gamma^{(n)}\prod_{k=1}^{\deg H_{m}(z,a_{n})}\left|z^{*}-z_{k}^{(n)}\right|
\]
where $\gamma^{(n)}$ is the lead coefficient of $H_{m}(z,a_{n})$.
We will see in the next lemma that $\deg H_{m}(z,a_{n})$ is at most
$\left\lfloor m/3\right\rfloor $. From this finite product and the
assumption that $z^{*}\notin I_{a}$, we conclude that there is a
fixed (independent of $n$) $\delta>0$ so that $|H_{m}(z^{*},a_{n})|>\delta$,
for all large $n$. Since $H_{m}(z^{*},a)$ is a polynomial in $a$
for any fixed $z^{*}$, we conclude that 
\[
H_{m}(z^{*},a^{*})=\lim_{n\rightarrow\infty}H_{m}(z^{*},a_{n})\ne0
\]
and the lemma follows.
\end{proof}
Lemma \ref{lem:densesubint} allows us to ignore some special values
of $a$. In particular, we may assume $a\ne0$. In our main approach,
we count the number of zeros of $H_{m}(z)$ on the interval $I_{a}$
in \eqref{eq:intervaldef} and show that this number of zeros is at
least the degree of $H_{m}(z)$. To count the number of zeros of $H_{m}(z)$
on $I_{a}$, we write $z=z(\theta)$ as a strictly increasing function
of a variable $\theta$ on the interval $(2\pi/3,\pi)$. Then we construct
a function $g_{m}(\theta)$ on $(2\pi/3,\pi)$ with the property that
$\theta$ is a zero of $g_{m}(\theta)$ on $(2\pi/3,\pi)$ if and
only if $z(\theta)$ is a zero of $H_{m}(z)$ on $I_{a}$. From this
construction, we count the number of zeros of $g_{m}(\theta)$ on
$(2\pi/3,\pi)$ which will be the same as the number of zeros of $H_{m}(z)$
on $I_{a}$ by the monotonicity of the function $z(\theta)$. We first
obtain an upper bound for the degree of $H_{m}(z)$ and provide heuristic
arguments for the formulas of $z(\theta)$ and $g_{m}(\theta)$. 
\begin{lem}
\label{degreelem}The degree of the polynomial $H_{m}(z)$ defined
by \eqref{eq:genfunc} is at most $\left\lfloor m/3\right\rfloor $. 
\end{lem}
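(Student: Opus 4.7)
The plan is to prove this by straightforward strong induction on $m$, using the three-term recurrence implicit in \eqref{eq:genfunc}. Multiplying both sides of \eqref{eq:genfunc} by $1+t+at^2+zt^3$ and equating coefficients of $t^m$ for $m\ge 3$ yields
\[
H_m(z)=-H_{m-1}(z)-aH_{m-2}(z)-zH_{m-3}(z),
\]
while for $m=0,1,2$ the same equation gives $H_0(z)=1$, $H_1(z)=-1$, $H_2(z)=1-a$. These three polynomials have degree $0$ in $z$, matching the bound $\lfloor m/3\rfloor=0$ for $m\in\{0,1,2\}$, so the base cases are handled.

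For the inductive step, assume that $\deg H_{m-j}(z)\le \lfloor (m-j)/3\rfloor$ for $j=1,2,3$. Using the recurrence displayed above,
\[
\deg H_m(z)\le \max\!\left\{\,\lfloor (m-1)/3\rfloor,\ \lfloor (m-2)/3\rfloor,\ 1+\lfloor (m-3)/3\rfloor\,\right\}.
\]
The arithmetic identity $1+\lfloor (m-3)/3\rfloor=\lfloor m/3\rfloor$ together with the obvious inequalities $\lfloor (m-1)/3\rfloor\le \lfloor m/3\rfloor$ and $\lfloor (m-2)/3\rfloor\le \lfloor m/3\rfloor$ then yields $\deg H_m(z)\le \lfloor m/3\rfloor$, completing the induction.

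There is no real obstacle here; the claim is essentially a bookkeeping consequence of the fact that each application of the recurrence multiplies by at most one power of $z$ while advancing the index $m$ by three. The only thing to verify is the floor identity $1+\lfloor (m-3)/3\rfloor=\lfloor m/3\rfloor$, which is immediate since $\lfloor (m-3)/3\rfloor=\lfloor m/3\rfloor-1$. I would mention explicitly that the bound is sharp only when $m\equiv 0\pmod 3$ if that sharpness is needed later, but the statement of the lemma asks only for an upper bound.
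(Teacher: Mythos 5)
Your proof is correct and follows essentially the same route as the paper: derive the recurrence $H_m=-H_{m-1}-aH_{m-2}-zH_{m-3}$ with initial conditions $H_0=1$, $H_1=-1$, $H_2=1-a$ from the generating function, then induct. The paper simply states ``the lemma follows from induction,'' so your write-up just supplies the bookkeeping (the floor identity and the base cases) that the paper leaves implicit.
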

\begin{proof}
We rewrite \eqref{eq:genfunc} as 
\[
(1+t+at^{2}+zt^{3})\sum_{m=0}^{\infty}H_{m}(z)t^{m}=1.
\]
By equating the coefficients in $t$ of both sides, we see that the
sequence $\left\{ H_{m}(z)\right\} _{m=0}^{\infty}$ satisfies the
recurrence 
\[
H_{m+3}(z)+H_{m+2}(z)+aH_{m+1}(z)+zH_{m}(z)=0
\]
and the initial condition $H_{0}(z)\equiv1$, $H_{1}(z)\equiv-1$,
and $H_{2}(z)\equiv1-a$. The lemma follows from induction.
\end{proof}

\subsection{Heuristic arguments}

We now provide heuristic arguments to motivate the formulas for two
special functions $z(\theta)$ and $g_{m}(\theta)$ on $(2\pi/3,\pi)$.
Let $t_{0}=t_{0}(z)$, $t_{1}=t_{1}(z)$, and $t_{2}=t_{2}(z)$ be
the three zeros of the denominator $1+t+at^{2}+zt^{3}$. We will show
rigorously in Section 2.2 that $t_{0}$, $t_{1}$, $t_{2}$ are nonzero
and distinct with $t_{0}=\overline{t_{1}}$. We let $q=t_{1}/t_{0}=e^{2i\theta}$,
$\theta\ne0,\pi$. We have 
\[
\sum_{m=0}^{\infty}H_{m}(z)t^{m}=\frac{1}{1+t+at^{2}+zt^{3}}=\frac{1}{z(t-t_{0})(t-t_{1})(t-t_{2})}.
\]
We apply partial fractions to rewrite the generating functions $1/z(t-t_{0})(t-t_{1})(t-t_{2})$
as 
\[
\frac{1}{z(t-t_{0})(t_{0}-t_{1})(t_{0}-t_{2})}+\dfrac{1}{z(t-t_{1})(t_{1}-t_{0})(t_{1}-t_{2})}+\dfrac{1}{z(t-t_{2})(t_{2}-t_{0})(t_{2}-t_{1})}
\]
which can be expanded as a series in $t$ below 
\begin{equation}
-\sum_{m=0}^{\infty}\frac{1}{z}\left(\frac{1}{(t_{0}-t_{1})(t_{0}-t_{2})t_{0}^{m+1}}+\frac{1}{(t_{1}-t_{0})(t_{1}-t_{2})t_{1}^{m+1}}+\frac{1}{(t_{2}-t_{0})(t_{2}-t_{1})t_{2}^{m+1}}\right)t^{m}.\label{eq:partialfractions}
\end{equation}
From this expression, we deduce that $z$ is a zero of $H_{m}(z)$
if and only if 
\begin{equation}
\frac{1}{(t_{0}-t_{1})(t_{0}-t_{2})t_{0}^{m+1}}+\frac{1}{(t_{1}-t_{0})(t_{1}-t_{2})t_{1}^{m+1}}+\frac{1}{(t_{2}-t_{0})(t_{2}-t_{1})t_{2}^{m+1}}=0.\label{eq:partialfraczero}
\end{equation}
After multiplying the left side of \eqref{eq:partialfraczero} by
$t_{0}^{m+3}$ we obtain the equality 
\[
\frac{1}{(1-t_{1}/t_{0})(1-t_{2}/t_{0})}+\frac{1}{(t_{1}/t_{0}-1)(t_{1}/t_{0}-t_{2}/t_{0})(t_{1}/t_{0})^{m+1}}+\frac{1}{(t_{2}/t_{0}-1)(t_{2}/t_{0}-t_{1}/t_{0})(t_{2}/t_{0})^{m+1}}=0.
\]
Setting $\zeta=t_{2}/t_{0}e^{i\theta}$, we rewrite the left side
as
\[
\frac{1}{(1-e^{2i\theta})(1-\zeta e^{i\theta})}+\frac{1}{(e^{2i\theta}-1)(e^{2i\theta}-\zeta e^{i\theta})(e^{2i\theta})^{m+1}}+\frac{1}{(\zeta e^{i\theta}-1)(\zeta e^{i\theta}-e^{2i\theta})(\zeta e^{i\theta})^{m+1}},
\]
or equivalently 
\[
\frac{1}{e^{2i\theta}(-2i\sin\theta)(e^{-i\theta}-\zeta)}+\frac{1}{(2i\sin\theta)(e^{i\theta}-\zeta)(e^{2i\theta})^{m+2}}+\frac{1}{(\zeta-e^{-i\theta})(\zeta-e^{i\theta})(\zeta)^{m+1}(e^{i\theta})^{m+3}}.
\]
We multiply this expression by $(\zeta-e^{-i\theta})(\zeta-e^{i\theta})e^{i(m+3)\theta}$
and set the summation to $0$ and rewrite \eqref{eq:partialfraczero}
as 
\begin{align}
0 & =\frac{(\zeta-e^{i\theta})e^{i(m+1)\theta}}{2i\sin\theta}+\frac{e^{-i\theta}-\zeta}{(2i\sin\theta)e^{i(m+1)\theta}}+\frac{1}{\zeta^{m+1}}\nonumber \\
 & =\frac{(\zeta-e^{i\theta})e^{i(m+1)\theta}-(\zeta-e^{-i\theta})e^{-i(m+1)\theta}}{2i\sin\theta}+\frac{1}{\zeta^{m+1}}\nonumber \\
 & =\frac{\zeta(e^{i(m+1)\theta}-e^{-i(m+1)\theta})+e^{-i(m+2)\theta}-e^{i(m+2)\theta}}{2i\sin\theta}+\frac{1}{\zeta^{m+1}}\nonumber \\
 & =\frac{\zeta(2i\sin(m+1)\theta)-2i\sin(m+2)\theta}{2i\sin\theta}+\frac{1}{\zeta^{m+1}}\nonumber \\
 & =\frac{2i\zeta\sin(m+1)\theta-2i\sin(m+1)\theta\cos\theta-2i\cos(m+1)\theta\sin\theta}{2i\sin\theta}+\frac{1}{\zeta^{m+1}}\nonumber \\
 & =\frac{(\zeta-\cos\theta)\sin(m+1)\theta}{\sin\theta}-\cos(m+1)\theta+\frac{1}{\zeta^{m+1}}.\label{eq:gthetamotivation}
\end{align}
The last expression will serve as the definition of $g_{m}(\theta)$. 

We next provide a motivation for the specific form of $z(\theta)$.
Since $t_{0}$, $t_{1}$, and $t_{2}$ are the zeros of $D(t,z)=1+t+at^{2}+zt^{3}$,
they satisfy the three identities 
\begin{align*}
t_{0}+t_{1}+t_{2} & =-\frac{a}{z},\\
t_{0}t_{1}+t_{0}t_{2}+t_{1}t_{2} & =\frac{1}{z},\qquad\text{and}\\
t_{0}t_{1}t_{2} & =-\frac{1}{z}.
\end{align*}
If we divide the first equation by $t_{0}$, the second by $t_{0}^{2}$,
and the third by $t_{0}^{3}$ then these identities become 
\begin{align}
1+e^{2i\theta}+\zeta e^{i\theta} & =-\frac{a}{zt_{0}},\label{eq:elemsym1}\\
e^{2i\theta}+\zeta e^{i\theta}+\zeta e^{3i\theta} & =\frac{1}{zt_{0}^{2}},\qquad\text{and}\label{eq:elemsym2}\\
\zeta e^{3i\theta} & =-\frac{1}{zt_{0}^{3}}.\label{eq:elemsym3}
\end{align}
We next divide the first identity by second, and the second by the
third to obtain 
\begin{align*}
\frac{1+e^{2i\theta}+\zeta e^{i\theta}}{e^{2i\theta}+\zeta e^{i\theta}+\zeta e^{3i\theta}} & =-at_{0},\qquad\text{and}\\
\frac{e^{2i\theta}+\zeta e^{i\theta}+\zeta e^{3i\theta}}{\zeta e^{3i\theta}} & =-t_{0},
\end{align*}
from which we deduce that 
\[
(1+e^{2i\theta}+\zeta e^{i\theta})\zeta e^{3i\theta}=a(e^{2i\theta}+\zeta e^{i\theta}+\zeta e^{3i\theta})^{2}.
\]
This equation is equivalent to 
\[
(e^{-i\theta}+e^{i\theta}+\zeta)\zeta e^{4i\theta}=ae^{4i\theta}(1+\zeta e^{-i\theta}+\zeta e^{i\theta})^{2},
\]
or simply 
\[
(2\cos\theta+\zeta)\zeta=a(1+2\zeta\cos\theta)^{2}.
\]

\begin{lem}
\label{lem:realzeta}For any $a\in[-1,1/3]$ and $\theta\in(2\pi/3,\pi)$,
the zeros in $\zeta$ of the polynomial 
\begin{equation}
(2\cos\theta+\zeta)\zeta-a(1+2\zeta\cos\theta)^{2}\label{eq:quadraticzetatheta}
\end{equation}
are real and distinct. 
\end{lem}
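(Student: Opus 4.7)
The plan is to expand (\ref{eq:quadraticzetatheta}) as a quadratic in $\zeta$ and show its discriminant is strictly positive on the prescribed range. Collecting powers of $\zeta$ yields
\[
(1 - 4a\cos^2\theta)\zeta^2 + 2(1-2a)\cos\theta\cdot \zeta - a,
\]
and after the $a^2\cos^2\theta$ contributions cancel, a direct computation shows the discriminant equals $4\bigl[(1-4a)\cos^2\theta + a\bigr]$.

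The main task is then to verify $\Phi(a,\theta) := (1-4a)\cos^2\theta + a > 0$ for $a \in [-1, 1/3]$ and $\theta \in (2\pi/3, \pi)$. Since $\cos\theta \in (-1, -1/2)$ on this range, one has $\cos^2\theta \in (1/4, 1)$ and hence $4\cos^2\theta - 1 \in (0, 3)$. Rewriting $\Phi = \cos^2\theta - a(4\cos^2\theta - 1)$ invites a clean case split. When $a \le 0$ the expression is a strictly positive term plus a nonnegative one, so $\Phi > 0$ immediately. When $0 < a \le 1/3$, the inequality $\Phi > 0$ is equivalent to $a < \cos^2\theta/(4\cos^2\theta - 1)$; since the function $u \mapsto u/(4u-1)$ is strictly decreasing on $(1/4, \infty)$ with value $1/3$ at $u = 1$, the right-hand side strictly exceeds $1/3 \ge a$ for every $\cos^2\theta < 1$. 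The tightest point is $a = 1/3$ with $\theta$ near $\pi$, where $\Phi$ simplifies to $(\sin^2\theta)/3$, which remains positive because $\theta$ ranges over an open interval that excludes $\pi$.

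Once positivity of the discriminant is established, the two roots are automatically real and distinct whenever the leading coefficient $1 - 4a\cos^2\theta$ is nonzero. That coefficient can only vanish when $a \in (1/4, 1/3]$ and $\cos^2\theta = 1/(4a)$, and at such an exceptional point the polynomial degenerates to $2(1-2a)\cos\theta\,\zeta - a$, which is genuinely linear since $1 - 2a > 0$ on the range and $\cos\theta \ne 0$, giving a unique real zero. The main obstacle is purely computational: executing the case split on the sign of $a$ and confirming strict positivity of $\Phi$ right up to the endpoint $a = 1/3$, where the monotonicity of $u/(4u-1)$ just barely gives the required slack.
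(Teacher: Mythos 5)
Your proof is correct and follows essentially the same route as the paper: both reduce the lemma to showing the discriminant $(1-4a)\cos^2\theta+a$ is strictly positive on the given range, differing only in how the case split on $a$ is organized (the paper uses three cases $a<0$, $0\le a<1/4$, $1/4\le a\le 1/3$; you use two, with a monotonicity argument for $u/(4u-1)$). Your extra paragraph on the vanishing leading coefficient $1-4a\cos^2\theta=0$ is a point the paper's proof of this lemma silently skips (it only addresses the resulting "vertical asymptote" of $\zeta(\theta)$ later), so that added care is welcome rather than a deviation.
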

\begin{proof}
We consider the discriminant of the above polynomial in $\zeta$:
\[
\Delta=(1-4a)\cos^{2}\theta+a.
\]
There are three possible cases depending on the value of $a$. If
$1/4\leq a\leq1/3$, the inequality $\Delta>0$ comes directly from
$a\geq4a-1>(1-4a)\cos^{2}\theta$. If $0\leq a<1/4$, the claim $\Delta>0$
is trivial since $1-4a>0$. Finally, if $a<0$, we have $\Delta\geq(1-4a)/4+a=1/4$.
It follows that the zeros of \eqref{eq:quadraticzetatheta} are real
and distinct for any $a\in[-1,1/3]$ and $\theta\in(2\pi/3,\pi)$. 
\end{proof}
To obtain the formula $z(\theta)$, we multiply both sides of \eqref{eq:elemsym1}
and \eqref{eq:elemsym2} 
\[
(1+e^{2i\theta}+\zeta e^{i\theta})(e^{2i\theta}+\zeta e^{i\theta}+\zeta e^{3i\theta})=-\frac{a}{z^{2}t_{0}^{3}}
\]
and divide by \eqref{eq:elemsym3} to arrive at the resulting equation
\begin{align}
z & =\frac{ae^{3i\theta}\zeta}{(1+e^{2i\theta}+\zeta e^{i\theta})(e^{2i\theta}+\zeta e^{i\theta}+\zeta e^{3i\theta})}\nonumber \\
 & =\frac{ae^{3i\theta}\zeta}{e^{3i\theta}(e^{-i\theta}+e^{i\theta}+\zeta)(1+\zeta e^{-i\theta}+\zeta e^{i\theta})}\nonumber \\
 & =\dfrac{a\zeta}{(2\cos\theta+\zeta)(1+2\zeta\cos\theta)}.\label{eq:zthetamotivation}
\end{align}

\subsection{Rigorous proof}

Motivated by Section 2.1, we will rigorously prove Theorem \eqref{thm:cneq0}
in this Section 2.2. We start our proof of Theorem \eqref{thm:cneq0}
by defining the function $\zeta(\theta)$ according to \eqref{eq:quadraticzetatheta}:
\begin{equation}
\zeta=\zeta(\theta)=\frac{(2a-1)\cos\theta+\sqrt{(1-4a)\cos^{2}\theta+a}}{1-4a\cos^{2}\theta}\label{eq:zetaform}
\end{equation}
which, from Lemma \ref{lem:realzeta}, is a real function on $(2\pi/3,\pi)$
with a possible vertical asymptote at 
\begin{equation}
\theta=\cos^{-1}\left(-\frac{1}{2\sqrt{a}}\right)\label{eq:vertasymp}
\end{equation}
when $1/4<a\le1/3$. However we note that the function $1/\zeta(\theta)$
is a real continuous function on $(2\pi/3,\pi)$. 
\begin{lem}
\label{lem:outsidedisk} Let $\zeta(\theta)$ be defined as in \eqref{eq:zetaform}.
Then $|\zeta(\theta)|>1$ for every $a\in(-1,1/3)$ and every $\theta\in(2\pi/3,\pi)$
with $1-4a\cos^{2}\theta\ne0$.
\end{lem}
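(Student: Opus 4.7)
The plan is to identify $\zeta(\theta)$ as a root of the quadratic
\[
P(\zeta) := (1 - 4a\cos^2\theta)\zeta^2 + 2(1 - 2a)\cos\theta\,\zeta - a
\]
obtained by expanding \eqref{eq:quadraticzetatheta}, and to locate its two real roots (guaranteed by Lemma \ref{lem:realzeta}) relative to $\pm 1$ by evaluating $P$ at these endpoints. With $x := \cos\theta \in (-1,-1/2)$, an elementary factorization gives, for $a\ne 0$,
\[
P(1) = -4a\bigl(x + \tfrac{1}{2}\bigr)\bigl(x - \tfrac{1-a}{2a}\bigr), \qquad P(-1) = -4a\bigl(x - \tfrac{1}{2}\bigr)\bigl(x + \tfrac{1-a}{2a}\bigr),
\]
reducing to $P(1) = 1 + 2x$ and $P(-1) = 1 - 2x$ at $a = 0$. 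A short sign check, using $(1-a)/(2a) > 1$ for $a \in (0,1/3)$ and $(1-a)/(2a) < -1$ for $a \in (-1,0)$, then yields the strict inequalities
\[
P(1) < 0 < P(-1)
\]
throughout the open region $a \in (-1,1/3)$, $\theta \in (2\pi/3,\pi)$ (with $1 - 4a\cos^2\theta \ne 0$).

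The conclusion then splits on the sign of the leading coefficient $A := 1 - 4a\cos^2\theta$. When $A > 0$, the formula \eqref{eq:zetaform} picks out the larger root of $P$; since $P$ opens upward and $P(1) < 0$, the point $1$ lies strictly between the two roots, so $\zeta(\theta) > 1$. When $A < 0$, division by the negative denominator in \eqref{eq:zetaform} means the $+$ sign before the radical actually selects the smaller root. In this case $P$ opens downward, so $P(-1) > 0$ places $-1$ strictly between the two roots while $P(1) < 0$ puts $1$ outside; combined, these force the larger root to lie in $(-1,1)$ and the smaller root to be strictly less than $-1$. Either way, $|\zeta(\theta)| > 1$.

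I expect the main obstacle to be entirely bookkeeping: one must carefully verify $P(1) < 0 < P(-1)$ in each sign regime of $a$, and the openness of the intervals is essential, since $P(1)$ vanishes at $\theta = 2\pi/3$ (where indeed $\zeta = 1$ identically in $a$) and analogous degeneracies occur at the boundary values $a = -1$ and $a = 1/3$. Once these strict inequalities are established, however, the two-case argument on the sign of $A$ is immediate from the quadratic's geometry.
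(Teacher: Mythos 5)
Your proof is correct and takes essentially the same route as the paper's: both locate the two real roots of the quadratic \eqref{eq:quadraticzetatheta} relative to $\pm1$ by evaluating it at those points (the paper shows the product $f(1)f(-1)<0$, you establish the individual signs $P(1)<0<P(-1)$). The only difference is the final identification of $\zeta(\theta)$ as the exterior root: the paper invokes $|\zeta_{+}|>|\zeta_{-}|$ (which holds since $(2a-1)\cos\theta>0$), whereas you argue by cases on the sign of the leading coefficient $1-4a\cos^{2}\theta$; both finishes are valid.
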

\begin{proof}
From \eqref{eq:quadraticzetatheta}, we note that $\zeta_{+}:=\zeta(\theta)$
and 
\[
\zeta_{-}:=\frac{(2a-1)\cos\theta-\sqrt{(1-4a)\cos^{2}\theta+a}}{1-4a\cos^{2}\theta}
\]
are the zeros of 
\[
f(\zeta):=(2\cos\theta+\zeta)\zeta-a(1+2\zeta\cos\theta)^{2}.
\]
Note that 
\[
f(-1)f(1)=(-1+2\cos\theta)(1+2\cos\theta)(4a^{2}\cos^{2}\theta-(a-1)^{2}).
\]
If $\theta\in(2\pi/3,\pi)$ and $a\in(-1,1/3)$, this product is negative
since
\[
4a^{2}\cos^{2}\theta-(a-1)^{2}\leq4a^{2}-(a-1)^{2}=(a+1)(3a-1)<0.
\]
Thus exactly one of the zeros of the quadratic function $f(\zeta)$
lies outside the interval $[-1,1]$. The claim follows from the fact
that $|\zeta_{+}|>|\zeta_{-}|$. 
\end{proof}
Although one can prove Lemma \ref{lem:outsidedisk} for the extreme
value $a=-1$ or $a=1/3$, that will not be necessary by Lemma \ref{lem:densesubint}.
Next we define the real function $z(\theta)$ by 
\begin{equation}
z=z(\theta):=\frac{a\zeta}{(2\cos\theta+\zeta)(1+2\zeta\cos\theta)}\label{eq:ztheta}
\end{equation}
on $(2\pi/3,\pi)$. From Lemma \ref{lem:outsidedisk}, $1+2\zeta\cos\theta\ne0$
and so does $2\cos\theta+\zeta$ by \eqref{eq:quadraticzetatheta}.
Dividing the numerator and the denominator by $\zeta^{2}(\theta)$
and combining with the fact that $1/\zeta(\theta)$ is continuous
on $(2\pi/3,\pi)$, we conclude that the possible discontinuity of
$z(\theta)$ in \eqref{eq:vertasymp} is removable. Finally, motivated
by \eqref{eq:gthetamotivation}, we define the function $g_{m}(\theta)$
by 
\begin{equation}
g_{m}(\theta):=\frac{(\zeta-\cos\theta)\sin(m+1)\theta}{\sin\theta}-\cos(m+1)\theta+\frac{1}{\zeta^{m+1}}\label{eq:gthetaform}
\end{equation}
which has the same vertical asymptote as that of $\zeta(\theta)$
in \eqref{eq:vertasymp} when $1/4<a\le1/3$. 

From Lemma \ref{lem:outsidedisk}, we see that the sign of the function
$g_{m}(\theta)$ alternates at values of $\theta$ where $\cos(m+1)\theta=\pm1$.
Thus by the Intermediate Value Theorem the function $g_{m}(\theta)$
has at least one root on each subinterval whose endpoints are the
solution of $\cos(m+1)=\pm1$. However, in the case $1/4\le a\le1/3$,
one of the subintervals will contain a vertical asymptote given in
\eqref{eq:vertasymp}. The lemma below counts the number of zeros
of $g_{m}(\theta)$ on such a subinterval.
\begin{lem}
\label{lem:singinterval}Let $g_{m}(\theta)$ be defined as in \eqref{eq:gthetaform}.
Suppose $1/4<a\le1/3$ and $m\ge6$. Then there exists $h\in\mathbb{N}$
such that 
\[
\theta_{h-1}:=\frac{h-1}{m+1}\pi<\cos^{-1}\left(-\frac{1}{2\sqrt{a}}\right)\le\frac{h}{m+1}\pi=:\theta_{h}
\]
where $\lfloor2(m+1)/3\rfloor+1\le h-1<h\le m+1$. Furthermore, as
long as 
\begin{equation}
\cos^{-1}\left(-\frac{1}{2\sqrt{a}}\right)\neq\frac{h}{m+1}\pi,\label{eq:differencecond}
\end{equation}
the function $g(\theta)$ has at least two zeros on the interval 
\begin{equation}
\theta\in\left(\frac{h-1}{m+1}\pi,\frac{h}{m+1}\pi\right):=J_{h}\label{eq:subintdef}
\end{equation}
whenever $h$ is at most $m$, and at least one zero when $h$ is
$m+1$. 
\end{lem}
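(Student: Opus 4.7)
The plan is to apply the Intermediate Value Theorem separately on each of the two open subintervals of $J_h$ obtained by removing the vertical asymptote $\theta^{*}:=\cos^{-1}(-1/(2\sqrt{a}))$. Three ingredients drive the argument: the values of $g_m$ at the grid points $\theta_k = k\pi/(m+1)$ are small and alternate in sign; $g_m$ blows up at $\theta^{*}$ with opposite signs on the two sides; and Lemma \ref{lem:outsidedisk} controls $|\zeta(\theta)|^{-(m+1)}$ by one.

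First I would verify the existence and range of $h$. For $a \in (1/4, 1/3]$ a short calculation gives $\theta^{*} \in [5\pi/6, \pi)$, so taking $h := \lceil (m+1)\theta^{*}/\pi \rceil$ yields $\theta_{h-1} < \theta^{*} \le \theta_h$ with $h \le m+1$ (because $\theta^{*} < \pi$). The lower bound $h-1 \ge \lfloor 2(m+1)/3 \rfloor + 1$ follows from $\theta^{*} \ge 5\pi/6$ combined with the hypothesis $m \ge 6$.

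Next I would extract the endpoint signs. Since $\sin((m+1)\theta_k) = 0$ and $\cos((m+1)\theta_k) = (-1)^k$, formula \eqref{eq:gthetaform} collapses to
\[
g_m(\theta_k) = (-1)^{k+1} + \zeta(\theta_k)^{-(m+1)},
\]
and Lemma \ref{lem:outsidedisk} (available because \eqref{eq:differencecond} forces $\theta_k \ne \theta^{*}$) makes the second term strictly smaller than one in absolute value. Thus $g_m(\theta_{h-1})$ and $g_m(\theta_h)$ have opposite signs $(-1)^h$ and $(-1)^{h+1}$ respectively.

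For the behavior at the asymptote, I would observe that the numerator of $\zeta(\theta)$ in \eqref{eq:zetaform} evaluates at $\theta^{*}$ to $(1-2a)/\sqrt{a} > 0$, while the denominator $1 - 4a\cos^2\theta$ is positive just below $\theta^{*}$ and negative just above. Hence $\zeta(\theta) \to +\infty$ as $\theta \to \theta^{*-}$ and $\zeta(\theta) \to -\infty$ as $\theta \to \theta^{*+}$. In \eqref{eq:gthetaform} the term $\zeta(\theta)\sin((m+1)\theta)/\sin\theta$ dominates near $\theta^{*}$ (the remaining terms being bounded there), and $\sin((m+1)\theta)$ has constant sign $(-1)^{h+1}$ on $J_h$ while $\sin\theta > 0$. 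Consequently the one-sided limits of $g_m(\theta)$ at $\theta^{*-}$ and $\theta^{*+}$ have signs $(-1)^{h+1}$ and $(-1)^h$, respectively. Comparing these with the endpoint signs and invoking continuity of $g_m$ on each of $(\theta_{h-1}, \theta^{*})$ and $(\theta^{*}, \theta_h)$, the IVT produces one zero in each subinterval, giving two zeros in $J_h$ whenever both endpoints lie inside $(2\pi/3, \pi)$, i.e., whenever $h \le m$. For the case $h = m+1$ the right endpoint $\theta_h = \pi$ is not interior, so I would apply the same argument only on the left subinterval $(\theta_m, \theta^{*})$ to get one zero.

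The main obstacle I foresee is confirming that $\zeta(\theta)\sin((m+1)\theta)/\sin\theta$ truly dominates the other terms near $\theta^{*}$. This reduces to checking that $\cos\theta$, $\sin((m+1)\theta)/\sin\theta$, $\cos((m+1)\theta)$, and $\zeta(\theta)^{-(m+1)}$ all remain bounded in a neighborhood of $\theta^{*}$, which is routine given $\theta^{*} < \pi$ (so $\sin\theta^{*} \ne 0$), $\sin((m+1)\theta^{*}) \ne 0$ by \eqref{eq:differencecond}, and $|\zeta(\theta^{*})| = \infty$.
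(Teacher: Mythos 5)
Your proposal is correct and takes essentially the same route as the paper's proof: it locates $h$ via $\cos^{-1}(-1/(2\sqrt a))\ge 5\pi/6$ together with $m\ge 6$, reads off the signs $(-1)^{h}$ and $(-1)^{h+1}$ at the grid endpoints from Lemma \ref{lem:outsidedisk}, and applies the Intermediate Value Theorem on each side of the asymptote using $\zeta\to+\infty$ on the left and $\zeta\to-\infty$ on the right. The only difference is that you spell out a few sign computations (the positivity of the numerator of $\zeta$ at the asymptote, the boundedness of the remaining terms) that the paper asserts without detail.
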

\begin{proof}
Suppose $a\in[1/4,1/3]$. Since the function $\cos^{-1}\left(-1/2\sqrt{x}\right)$
is decreasing on the interval $[1/4,1/3]$, we conclude that 
\[
\cos^{-1}\left(-\frac{1}{2\sqrt{a}}\right)\ge\frac{5\pi}{6}.
\]
Then the existence of $h$ comes directly from the inequality 
\[
\frac{\lfloor2(m+1)/3\rfloor+1}{m+1}\pi<\frac{5\pi}{6}
\]
when $m\ge6$. 

The vertical asympote at $\cos^{-1}(-1/2\sqrt{a})$ of $g_{m}(\theta)$
divides the interval $J_{h}$ in \eqref{eq:subintdef} into two subintervals.
We will show that each subinterval contains at least a zero of $g_{m}(\theta)$
if $h\le m$. In the case $h=m+1$, only the subinterval on the left
contains at least zero of $g_{m}(\theta)$. We analyze these two subintervals
in the two cases below.

We consider the first case when $\theta\in I_{h}$ and $\theta<\cos^{-1}(-1/2\sqrt{a})$.
By Lemma \ref{lem:outsidedisk} and \eqref{eq:gthetaform} we see
that the sign of $g_{m}(\theta_{h-1})$ is $(-1)^{h}$. We now show
that the sign of $g_{m}(\theta)$ is $(-1)^{h-1}$ when $\theta\rightarrow\cos^{-1}(-1/2\sqrt{a})$.
From \eqref{eq:zetaform}, we observe that $\zeta(\theta)\rightarrow+\infty$
as $\theta\rightarrow\cos^{-1}(-1/2\sqrt{a})$. Since $\theta\in I_{h}$,
the sign of $\sin(m+1)\theta$ is $(-1)^{h-1}$ and consequently the
sign of $g_{m}(\theta)$ is $(-1)^{h-1}$ when $\theta\rightarrow\cos^{-1}(-1/2\sqrt{a})$
by \eqref{eq:gthetaform}. By the Intermediate Value Theorem, we obtain
at least one zero of $g_{m}(\theta)$ in this case.

Next we consider the case when $\theta\in I_{h}$ and $\theta>\cos^{-1}(-1/2\sqrt{a})$.
In this case the sign of $g_{m}(\theta_{h})$ is $(-1)^{h-1}$ if
$h\le m$ by Lemma \ref{lem:outsidedisk}. Since $\zeta(\theta)\rightarrow-\infty$
as $\theta\rightarrow\cos^{-1}(-1/2\sqrt{a})$ and the sign of $\sin(m+1)\theta$
is $(-1)^{h-1}$, the sign of $g_{m}(\theta)$ is $(-1)^{h}$ as $\theta\rightarrow\cos^{-1}(-1/2\sqrt{a})$.
By the Intermediate Value Theorem, we obtain at least one zero of
$g_{m}(\theta)$ in this case and $h\le m$. 
\end{proof}
Note that we may assume \eqref{eq:differencecond} by Lemma\ref{lem:densesubint}.
From the fact that the sign of $g_{m}(\theta)$ in \eqref{eq:gthetaform}
alternates when $\cos(m+1)\theta=\pm1$, we can find a lower bound
for the number of zeros of $g_{m}(\theta)$ on $(2\pi/3,\pi)$ by
the Intermediate Value Theorem. We will relate the zeros of $g_{m}(\theta)$
to the zeros of $H_{m}(z)$ by \eqref{eq:gthetamotivation}. However
to ensure that the partial fractions procedure preceding equation
\eqref{eq:gthetamotivation} is rigorous, we need the lemma below.
\begin{lem}
\label{lem:zerosofP(t)}Let $\theta\in(0,\pi)$ such that $\theta\neq\cos^{-1}\left(-1/2\sqrt{a}\right)$
whenever $a>1/4$. The zeros in $t$ of $1+t+at^{2}+z(\theta)t^{3}$
are 
\[
t_{0}=-\dfrac{e^{2i\theta}+\zeta e^{i\theta}+\zeta e^{3i\theta}}{\zeta e^{3i\theta}},\qquad t_{1}=t_{0}e^{2i\theta},\qquad t_{2}/t_{0}=\zeta e^{i\theta}
\]
where $\zeta:=\zeta(\theta)$ is given in \eqref{eq:zetaform}.
\end{lem}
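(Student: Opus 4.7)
My plan is to verify the claim by a direct check of Vieta's formulas. Since $1 + t + at^{2} + z(\theta)t^{3}$ is a cubic with leading coefficient $z$, a triple $(t_{0}, t_{1}, t_{2})$ of pairwise distinct numbers are its three roots precisely when
\[
t_{0}+t_{1}+t_{2} = -\frac{a}{z}, \qquad t_{0}t_{1}+t_{0}t_{2}+t_{1}t_{2} = \frac{1}{z}, \qquad t_{0}t_{1}t_{2} = -\frac{1}{z}.
\]

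The key simplification is a collapse of the trigonometric data. Using $\zeta e^{-i\theta} + \zeta e^{i\theta} = 2\zeta\cos\theta$, the numerator of $t_{0}$ becomes $e^{2i\theta}(1 + 2\zeta\cos\theta)$, so
\[
t_{0} = -\frac{1 + 2\zeta\cos\theta}{\zeta e^{i\theta}}.
\]
In parallel, $1 + e^{2i\theta} + \zeta e^{i\theta} = e^{i\theta}(2\cos\theta + \zeta)$ and $e^{2i\theta} + \zeta e^{i\theta} + \zeta e^{3i\theta} = e^{2i\theta}(1 + 2\zeta\cos\theta)$. The defining identity $(2\cos\theta + \zeta)\zeta = a(1 + 2\zeta\cos\theta)^{2}$ from \eqref{eq:quadraticzetatheta} then lets me recast \eqref{eq:ztheta} in the cleaner equivalent form $z(\theta) = \zeta^{2}/(1+2\zeta\cos\theta)^{3}$.

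With these pieces in hand, the three elementary symmetric functions of $t_{0}$, $t_{1} = t_{0}e^{2i\theta}$, $t_{2} = t_{0}\zeta e^{i\theta}$ compute to
\begin{align*}
t_{0}+t_{1}+t_{2} &= t_{0}\cdot e^{i\theta}(2\cos\theta+\zeta) = -\frac{(1+2\zeta\cos\theta)(2\cos\theta+\zeta)}{\zeta},\\
t_{0}t_{1}+t_{0}t_{2}+t_{1}t_{2} &= t_{0}^{2}\cdot e^{2i\theta}(1+2\zeta\cos\theta) = \frac{(1+2\zeta\cos\theta)^{3}}{\zeta^{2}},\\
t_{0}t_{1}t_{2} &= t_{0}^{3}\cdot\zeta e^{3i\theta} = -\frac{(1+2\zeta\cos\theta)^{3}}{\zeta^{2}},
\end{align*}
and matching each right-hand side against $-a/z$, $1/z$, $-1/z$ using the alternative formula for $z$ and the quadratic relation for $\zeta$ is then elementary algebra.

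The main obstacle is simply the bookkeeping of nonzero denominators and distinctness. One needs $\zeta$ finite (handled by the hypothesis $\theta \ne \cos^{-1}(-1/(2\sqrt{a}))$ when $a > 1/4$), $\zeta \ne 0$ and $1+2\zeta\cos\theta \ne 0$ and $2\cos\theta + \zeta \ne 0$ (each follows from $|\zeta|>1$ via Lemma \ref{lem:outsidedisk} combined with the quadratic relation, since the vanishing of any of these factors together with that relation forces another to vanish as well). Distinctness of the $t_{i}$ follows from $e^{2i\theta} \ne 1$ on $(0,\pi)$ and $|\zeta e^{i\theta}| = |\zeta| > 1$. Once these routine non-vanishing items are verified, the Vieta identification forces $t_{0}, t_{1}, t_{2}$ to be exactly the three zeros of the cubic.
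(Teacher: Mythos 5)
Your proposal is correct and is essentially the paper's argument: both proofs are direct algebraic verifications resting on the same two identities, the quadratic relation $(2\cos\theta+\zeta)\zeta=a(1+2\zeta\cos\theta)^{2}$ and the equivalent form $z=\zeta^{2}/(1+2\zeta\cos\theta)^{3}$ (the paper's \eqref{eq:altzform}). The only cosmetic difference is that the paper substitutes each $t_{i}$ into the cubic and checks $P(t_{i})=0$, while you match the three elementary symmetric functions against the coefficients; this is the same computation run through Vieta, and your bookkeeping of the nonvanishing denominators is at the same level of care as the paper's.
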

\begin{proof}
We first note that

\begin{align*}
P(t_{0}) & =1+t_{0}+at_{0}^{2}+zt_{0}^{3}\\
 & =-\frac{1}{\zeta e^{i\theta}}-e^{-2i\theta}+\frac{a}{\zeta^{2}e^{2i\theta}}\left(1+\zeta e^{-i\theta}+\zeta e^{i\theta}\right)^{2}-\frac{z}{\zeta^{3}e^{3i\theta}}\left(1+\zeta e^{-i\theta}+\zeta e^{i\theta}\right)^{3}.
\end{align*}
where $\zeta$ is a root of the quadratic equation $(2\cos\theta+\zeta)\zeta-a(1+2\zeta\cos\theta)^{2}=0$.
We apply the following identities 
\[
(1+\zeta e^{-i\theta}+\zeta e^{i\theta})^{2}=(1+2\zeta\cos\theta)^{2}=\frac{1}{a}(2\cos\theta+\zeta)\zeta=\frac{1}{a}(e^{-i\theta}+e^{i\theta}+\zeta)\zeta
\]
and 
\begin{equation}
z=\frac{a\zeta}{(2\cos\theta+\zeta)(1+2\zeta\cos\theta)}=\frac{\zeta^{2}}{(1+2\zeta\cos\theta)^{3}}=\frac{\zeta^{2}}{(1+\zeta e^{-i\theta}+\zeta e^{i\theta})^{3}}\label{eq:altzform}
\end{equation}
to conclude that $P(t_{0})=0$. Similarly, we have

\begin{align*}
P(t_{1}) & =1+t_{0}e^{2i\theta}+at_{0}^{2}e^{4i\theta}+zt_{0}^{3}e^{6i\theta}\\
 & =-\frac{e^{i\theta}}{\zeta}-e^{2i\theta}+\frac{ae^{2i\theta}}{\zeta^{2}}\left(1+\zeta e^{-i\theta}+\zeta e^{i\theta}\right)^{2}-\frac{ze^{3i\theta}}{\zeta^{3}}\left(1+\zeta e^{-i\theta}+\zeta e^{i\theta}\right)^{3}\\
 & =-\frac{e^{i\theta}}{\zeta}-e^{2i\theta}+\frac{ae^{2i\theta}}{\zeta^{2}}\frac{(e^{-i\theta}+e^{i\theta}+\zeta)\zeta}{a}-\frac{e^{3i\theta}}{\zeta^{3}}\zeta^{2}=0.
\end{align*}
Finally

\begin{align*}
P(t_{2}) & =P(\zeta t_{0}e^{i\theta})\\
 & =-\zeta e^{-i\theta}-\zeta e^{i\theta}+a\left(1+\zeta e^{-i\theta}+\zeta e^{i\theta}\right)^{2}-z\left(1+\zeta e^{-i\theta}+\zeta e^{i\theta}\right)^{3}\\
 & =-\zeta e^{-i\theta}-\zeta e^{i\theta}+a\frac{1}{a}\left(e^{-i\theta}+e^{i\theta}+\zeta\right)\zeta-\zeta^{2}=0.
\end{align*}
\end{proof}
As a consequence of Lemma \ref{lem:zerosofP(t)}, if $\theta\in(2\pi/3,\pi)$,
then the zeros of $1+t+at^{2}+z(\theta)t^{3}$ will be distinct and
$t_{1}=\overline{t_{0}}$ since $\zeta\in\mathbb{R}$ by Lemma \ref{lem:realzeta}.
Thus we can apply partial fractions given in the beginning of Section
2.1. From this partial fraction decomposition, we conclude that if
$\theta$ is a zero of $g_{m}(\theta)$, then $z(\theta)$ will be
a zero of $H_{m}(z)$. In fact, we claim that each distinct zero of
$g_{m}(\theta)$ on $(2\pi/3,\pi)$ produces a distinct zero of $H_{m}(z)$
on $I_{a}$. This is the content of the following two lemmas. 
\begin{lem}
\label{lem:zmonotone}Let $\zeta(\theta)$ be defined as in \eqref{eq:zetaform}.
The function $z(\theta)$ defined as in \eqref{eq:ztheta} is increasing
on $\theta\in(2\pi/3,\pi)$. 
\end{lem}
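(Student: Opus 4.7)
The plan is to show that $dz/d\theta>0$ on $(2\pi/3,\pi)$, apart possibly from the removable singularity $\theta^{*}:=\cos^{-1}(-1/(2\sqrt{a}))$ that arises when $1/4<a\le1/3$; monotonicity across $\theta^{*}$ will then follow by continuity of $z(\theta)$. Rather than differentiate \eqref{eq:ztheta} directly, I would use the equivalent formula $z=\zeta^{2}/w^{3}$ from \eqref{eq:altzform}, where $w:=1+2\zeta\cos\theta$. Writing the defining relation in the form $\zeta^{2}+2c\zeta=aw^{2}$, or equivalently $A\zeta^{2}+2cB\zeta-a=0$ with $A=1-4ac^{2}$, $B=1-2a$, and $c=\cos\theta$, keeps the algebra manageable.

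The main computation then goes as follows. Implicit differentiation of the constraint yields
\[
\zeta'(\theta)=\frac{s\zeta(B-4ac\zeta)}{A\zeta+cB},\qquad w'(\theta)=2(c\zeta'-s\zeta)=\frac{-2s\zeta^{2}}{A\zeta+cB},
\]
where $s=\sin\theta$ and the second simplification uses $4ac^{2}+A=1$. Since $2c\zeta=w-1$ gives $\zeta^{2}=aw^{2}-w+1$, the function is $z=g(w)$ with $g(w):=(aw^{2}-w+1)/w^{3}$, so $g'(w)=-(aw^{2}-2w+3)/w^{4}$. Combining,
\[
\frac{dz}{d\theta}=g'(w)\,w'(\theta)=\frac{2s\zeta^{2}(aw^{2}-2w+3)}{w^{4}(A\zeta+cB)}.
\]

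The identity that makes the sign transparent is
\[
aw^{2}-2w+3=(\zeta^{2}+2c\zeta)-2(1+2c\zeta)+3=\zeta^{2}-2c\zeta+1=(\zeta-c)^{2}+\sin^{2}\theta,
\]
which is strictly positive on $(2\pi/3,\pi)$. Next, \eqref{eq:zetaform} rewrites as $A\zeta+cB=\sqrt{(1-4a)c^{2}+a}$, which is real and strictly positive by Lemma \ref{lem:realzeta}. Finally, $w\ne 0$ because $|\zeta|>1$ (Lemma \ref{lem:outsidedisk}) and $|c|>1/2$ on $(2\pi/3,\pi)$ force $|2c\zeta|>1$. Hence $dz/d\theta>0$ wherever the formula applies.

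The main thing to handle carefully is the singularity at $\theta^{*}$: there $\zeta\to\pm\infty$, but an asymptotic calculation shows $z=\zeta^{2}/w^{3}\to 0$ from both sides ($0^{-}$ from the left, $0^{+}$ from the right), so $z$ extends continuously with $z(\theta^{*})=0$. A continuous function with positive derivative off an isolated point is strictly increasing, and the lemma follows. A cleaner alternative, if one wishes to avoid this limit argument entirely, is to reparametrize by $u=1/\zeta$: the constraint becomes $1+2cu=a(u+2c)^{2}$ and $z=u/(u+2c)^{3}$, so that $u=0$ is a regular interior point of the parametrization and the same monotonicity computation goes through uniformly.
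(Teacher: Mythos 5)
Your argument is correct, and it takes a genuinely different computational route from the paper's. The paper differentiates the relation $-z=(1+t_{0}+at_{0}^{2})/t_{0}^{3}$ in the complex variables $t_{0}$ and $t_{1}=t_{0}e^{2i\theta}$, solves for $dz/d\theta$ as $2if(t_{0})f(t_{1})t_{0}e^{2i\theta}/(f(t_{0})-f(t_{1})e^{2i\theta})$ with $f(t)=(3+2t+at^{2})/t^{4}$, uses $f(t_{0})f(t_{1})=|f(t_{0})|^{2}\ge0$, and reduces the denominator to $\frac{2\sin\theta}{\tau^{4}}\left(-\cos\theta+a/\zeta+2a\cos\theta\right)$, which it shows equals a positive multiple of $\sqrt{(1-4a)\cos^{2}\theta+a}$. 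You instead stay entirely in the real variables $\zeta$ and $w=1+2\zeta\cos\theta$, implicitly differentiate the quadratic constraint, and use the chain rule through $z=g(w)=(aw^{2}-w+1)/w^{3}$. The two proofs ultimately hinge on the same positive quantity: your $A\zeta+cB=\sqrt{(1-4a)\cos^{2}\theta+a}$ is literally the paper's $\zeta(1-4a\cos^{2}\theta)+\cos\theta(1-2a)$, and your identity $aw^{2}-2w+3=(\zeta-\cos\theta)^{2}+\sin^{2}\theta$ plays the role of the paper's $|f(t_{0})|^{2}$ --- with the advantage that it is manifestly \emph{strictly} positive, so you do not need the paper's separate (and somewhat terse) argument that $f(t_{0})\ne0$. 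Your approach also avoids complex differentials entirely and treats the removable singularity at $\cos^{-1}(-1/(2\sqrt{a}))$ explicitly, which the paper's proof of this lemma passes over in silence (the continuity of $z$ there is only established earlier, in the discussion following Lemma \ref{lem:outsidedisk}); your alternative reparametrization by $u=1/\zeta$ is a clean way to make that point moot. All the individual computations check out ($\zeta'$, $w'$, the identity $4a\cos^{2}\theta+A=1$, the sign of $w$ via $|2\zeta\cos\theta|>1$), so the proof is complete as written.
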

\begin{proof}
Lemma \ref{lem:zerosofP(t)} gives 
\[
-z=\frac{1+t_{0}+at_{0}^{2}}{t_{0}^{3}}=\frac{1+t_{1}+at_{1}^{2}}{t_{1}^{3}}.
\]
We differentiate the three terms and obtain 
\begin{equation}
dz=\frac{3+2t_{0}+at_{0}^{2}}{t_{0}^{4}}dt_{0}=\frac{3+2t_{1}+at_{1}^{2}}{t_{1}^{4}}dt_{1},\label{eq:diffeqzt}
\end{equation}
where 
\[
dt_{1}=d(t_{0}e^{2i\theta})=e^{2i\theta}dt_{0}+2it_{0}e^{2i\theta}d\theta.
\]
If we set
\[
f(t_{0})=\frac{3+2t_{0}+at_{0}^{2}}{t_{0}^{4}},\qquad f(t_{1})=\frac{3+2t_{1}+at_{1}^{2}}{t_{1}^{4}}
\]
then $f(t_{0})=\overline{f(t_{1})}$, and consequently $f(t_{0})f(t_{1})\ge0$.
Thus \eqref{eq:diffeqzt} implies that
\[
f(t_{0})dt_{0}=f(t_{1})(e^{2i\theta}dt_{0}+2it_{0}e^{2i\theta}d\theta).
\]
After solving this equation for $dt_{0}$ and substituting it into
\eqref{eq:diffeqzt}, we obtain 
\begin{equation}
\frac{dz}{d\theta}=\frac{2if(t_{0})f(t_{1})t_{0}e^{2i\theta}}{f(t_{0})-f(t_{1})e^{2i\theta}}.\label{eq:diffztheta}
\end{equation}
With $t_{0}=\tau e^{-i\theta}$, $\tau\in\mathbb{R}$, we have 
\begin{align*}
\frac{f(t_{0})-f(t_{1})e^{2i\theta}}{2it_{0}e^{2i\theta}} & =\frac{f(t_{0})e^{-i\theta}-f(t_{1})e^{i\theta}}{2it_{0}e^{i\theta}}\\
 & =\frac{\Im\left(f(t_{0})e^{-i\theta}\right)}{\tau}\\
 & =\frac{1}{\tau}\Im\left(\frac{3+2t_{0}+at_{0}^{2}}{t_{0}^{4}}e^{-i\theta}\right).
\end{align*}
We now substitute $3=-3t_{0}-3at_{0}^{2}-3zt_{0}^{3}$ and have 
\begin{align*}
\frac{f(t_{0})-f(t_{1})e^{2i\theta}}{2it_{0}e^{2i\theta}} & =\frac{1}{\tau}\Im\left(\frac{-t_{0}-2at_{0}^{2}-3zt_{0}^{3}}{t_{0}^{4}}e^{-i\theta}\right)\\
 & =\frac{1}{\tau^{4}}\Im\left(-e^{2i\theta}-2a\tau e^{i\theta}-3z\tau^{2}\right)\\
 & =\frac{1}{\tau^{4}}\left(-\sin2\theta-2a\tau\sin\theta\right)\\
 & =\frac{2\sin\theta}{\tau^{4}}\left(-\cos\theta-a\tau\right).
\end{align*}
In the formula of $t_{0}$ in Lemma \ref{lem:zerosofP(t)}, we substitute
$\tau=-1/\zeta-2\cos\theta$ and obtain

\begin{align*}
\frac{f(t_{0})-f(t_{1})e^{2i\theta}}{2it_{0}e^{2i\theta}} & =\frac{2\sin\theta}{\tau^{4}}\left(-\cos\theta+a/\zeta+2a\cos\theta\right).
\end{align*}
We finish this lemma by showing that $-\cos\theta+a/\zeta+2a\cos\theta>0$,
which implies $f(t_{0})=\overline{f(t_{1})}\ne0$ and and the lemma
will follow from \eqref{eq:diffztheta}. We expand and divide both
sides of \eqref{eq:quadraticzetatheta} by $\zeta$ to get 
\[
\zeta(1-4a\cos^{2}\theta)+2\cos\theta(1-2a)-a/\zeta=0,
\]
or equivalently, 
\[
\zeta(1-4a\cos^{2}\theta)+\cos\theta(1-2a)=-\cos\theta+2a\cos\theta+a/\zeta.
\]
Finally, using the definition of $\zeta$ in \eqref{eq:zetaform}
and Lemma \ref{lem:realzeta}, we note that 
\[
\zeta(1-4a\cos^{2}\theta)+\cos\theta(1-2a)=\sqrt{(1-4a)\cos^{2}\theta+a}>0.
\]
The proof is complete.
\end{proof}
\begin{lem}
\label{lem:zonto}The function $z(\theta)$ as defined in \eqref{eq:ztheta}
maps the interval $(2\pi/3,\pi)$ onto the interior of $I_{a}$. 
\end{lem}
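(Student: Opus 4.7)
The plan is to identify $z(\theta)$ as a strictly increasing, continuous bijection from $(2\pi/3,\pi)$ onto the interior of $I_a$, and then conclude by the Intermediate Value Theorem. Lemma \ref{lem:zmonotone} already supplies strict monotonicity, and the discussion following \eqref{eq:ztheta} gives continuity (with removable singularity at $\cos^{-1}(-1/(2\sqrt{a}))$ when $1/4<a\le 1/3$), so the image is an open interval whose endpoints are $L := \lim_{\theta\to (2\pi/3)^+} z(\theta)$ and $R := \lim_{\theta\to \pi^-} z(\theta)$. By Lemma \ref{lem:densesubint}, I may exclude the exceptional values $a=0$ and $a=1/4$ from the computation and recover them at the end by density.

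I would compute $R$ first using the cleaner identity $z = \zeta^2/(1+2\zeta\cos\theta)^3$ from \eqref{eq:altzform}. Substituting $\cos\theta=-1$ into \eqref{eq:zetaform} and introducing $s:=\sqrt{1-3a}$, the identity $3(1-4a)=(2s-1)(2s+1)$ collapses $\zeta(\pi)$ to the simple form $(s+1)/(2s-1)$, whence $1-2\zeta(\pi)=-3/(2s-1)$. Direct substitution then yields
$$
R \;=\; \frac{(s+1)^2(1-2s)}{27} \;=\; \frac{1-3s^2-2s^3}{27} \;=\; \frac{-2+9a-2\sqrt{(1-3a)^3}}{27},
$$
which is precisely the right endpoint of $I_a$. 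For the other limit, I change variables to $u:=1+2\cos\theta$, so that $u\to 0^-$ as $\theta\to (2\pi/3)^+$. At $u=0$, equation \eqref{eq:quadraticzetatheta} factors as $(1-\zeta)[\zeta(1-a)+a]=0$, with $\zeta_+$ selecting the root $\zeta=1$. An implicit Taylor expansion of the quadratic root in $u$ gives
$$
\zeta \;=\; 1 - u + 4au^2 + O(u^3),
$$
which produces the critical cancellations
$$
2\cos\theta + \zeta \;=\; 4au^2 + O(u^3), \qquad 1 + 2\zeta\cos\theta \;=\; 2u + O(u^2).
$$
Consequently $z(\theta) \sim a/(8au^3) = 1/(8u^3) \to -\infty$ as $u\to 0^-$ (this step uses $a\ne 0$), so $L=-\infty$.

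Combining these limits with strict monotonicity and continuity, $z$ maps $(2\pi/3,\pi)$ bijectively onto $(-\infty,R)$, which is the interior of $I_a$. The main obstacle is the expansion at $\theta=2\pi/3$: both factors in the denominator of \eqref{eq:ztheta} vanish simultaneously, and one must push $\zeta$ through order $u^2$ to isolate the cubic cancellation $8au^3$ that produces the correct $-\infty$ limit with the correct sign; stopping one order early would miss the leading behavior. The right-endpoint computation, while algebraically dense, collapses neatly once the substitution $s=\sqrt{1-3a}$ is made.
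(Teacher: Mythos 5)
Your proof is correct and follows essentially the same route as the paper: combine the monotonicity and continuity of $z(\theta)$ with the computation of the two endpoint limits, obtaining $-\infty$ at $(2\pi/3)^+$ and the right endpoint of $I_a$ at $\pi^-$. The only difference is computational: at $(2\pi/3)^+$ the paper avoids your order-$u^2$ Taylor expansion by working with \eqref{eq:altzform} directly, since $\zeta\to 1^+$ and $\cos\theta<-1/2$ force $1+2\zeta\cos\theta\to 0^-$, so $z=\zeta^{2}/(1+2\zeta\cos\theta)^{3}\to-\infty$ immediately.
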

\begin{proof}
Since $z(\theta)$ is a continuous increasing function on $(2\pi/3,\pi)$,
we only need to evaluate the limits of $z(\theta)$ at the endpoints.
Since $|\zeta|>1$ by Lemma \ref{lem:outsidedisk}, the formula of
$\zeta(\theta)$ in \eqref{eq:zetaform} implies that $\zeta(\theta)\rightarrow1^{+}$
as $\theta\rightarrow(2\pi/3)^{+}$. Consequently, \eqref{eq:altzform}
gives
\[
\lim_{\theta\rightarrow(2\pi/3)^{+}}z(\theta)=-\infty.
\]
Finally, from the fact that
\[
\lim_{\theta\to\pi}\zeta(\theta)=\frac{1-2a+\sqrt{1-3a}}{1-4a},
\]
and \eqref{eq:ztheta}, we have
\begin{align}
\lim_{\theta\to\pi}z(\theta) & =\frac{a(1-2a+\sqrt{1-3a})(1-4a)}{(-1+6a+\sqrt{1-3a})(-1-2\sqrt{1-3a})}\nonumber \\
 & =\frac{a(-1+4a)^{2}(-2+9a)+2a(-1+3a)(-1+4a)^{2}\sqrt{1-3a}}{27(1-4a)^{2}a}\label{eq:limitzatpi}\\
 & =\frac{-2+9a-2\sqrt{(1-3a)^{3}}}{27},\nonumber 
\end{align}
where we obtain \eqref{eq:limitzatpi} by multiply and divide the
fraction by $\left(-1+6a-\sqrt{1-3a}\right)\left(-1+2\sqrt{1-3a}\right)$. 
\end{proof}
Before making final arguments to connect all results in this section,
we check the sign of $g_{m}(\theta)$ at one of the endpoints. 
\begin{lem}
\label{lem:signend}If $-1\le a<1/4$ then the sign of $g_{m}(\pi^{-})$
is $(-1)^{m}$.
\end{lem}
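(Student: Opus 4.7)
The plan is to compute the one-sided limit $g_m(\pi^-) := \lim_{\theta\to\pi^-} g_m(\theta)$ directly from the definition \eqref{eq:gthetaform} and read off the sign. The only factor requiring care is the indeterminate ratio $\sin(m+1)\theta/\sin\theta$, which I would handle by writing $\theta = \pi - \epsilon$ and Taylor expanding, giving the limit $(-1)^m(m+1)$. The remaining ingredients are elementary: $\cos\theta \to -1$, $\cos(m+1)\theta \to (-1)^{m+1}$, and $\zeta(\theta)$ extends continuously to $\zeta(\pi) = (1-2a + \sqrt{1-3a})/(1-4a)$, as already recorded in the proof of Lemma \ref{lem:zonto} (which is finite since $1 - 4a > 0$ in our range). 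Substituting into \eqref{eq:gthetaform} yields the closed form
\begin{equation*}
g_m(\pi^-) = (-1)^m\bigl[(\zeta(\pi) + 1)(m+1) + 1\bigr] + \zeta(\pi)^{-(m+1)}.
\end{equation*}

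From this expression, the conclusion will follow once I check that $\zeta(\pi) \ge 1$. A direct manipulation gives
\begin{equation*}
\zeta(\pi) - 1 = \frac{2a + \sqrt{1-3a}}{1-4a},
\end{equation*}
so the task reduces to showing $f(a) := 2a + \sqrt{1-3a} \ge 0$ on $[-1, 1/4)$. This is a brief one-variable calculus check: $f(-1) = 0$, $f(1/4) = 1$, and $f'$ has a unique zero in the interior which is a local maximum, so $f$ is nonnegative throughout. Granted $\zeta(\pi) \ge 1$, the bracketed factor satisfies $(\zeta(\pi)+1)(m+1) + 1 \ge 3$, while $\zeta(\pi)^{-(m+1)} \in (0,1]$. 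The first summand therefore dominates in absolute value, and the sign of $g_m(\pi^-)$ is forced to be $(-1)^m$.

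The one slightly delicate point is the extreme value $a = -1$, where Lemma \ref{lem:outsidedisk} does not apply and one obtains only $\zeta(\pi) = 1$ rather than strict inequality. The strict bound $3 > 1$ nevertheless lets the dominance argument close in that boundary case as well; alternatively, by Lemma \ref{lem:densesubint} the value $a = -1$ could be avoided entirely, but the direct verification is painless.
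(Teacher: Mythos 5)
Your proposal is correct and follows essentially the same route as the paper: pass to the limit $\theta\to\pi^-$ in \eqref{eq:gthetaform}, using $\lim_{\theta\to\pi^-}\sin(m+1)\theta/\sin\theta=(m+1)(-1)^m$ and $\zeta(\pi)=(1-2a+\sqrt{1-3a})/(1-4a)\ge1$, and observe that the resulting dominant term carries the sign $(-1)^m$. You merely make explicit the computations the paper leaves as ``one can check,'' including the verification that $2a+\sqrt{1-3a}\ge0$ on $[-1,1/4)$ and the dominance bound $3>1$.
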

\begin{proof}
Since $-1\le a<1/4$, one can check that 
\[
\lim_{\theta\to\pi^{-}}\zeta(\theta)=\frac{1-2a+\sqrt{1-3a}}{1-4a}\geq1.
\]
The result then follows directly from \eqref{eq:gthetaform} and the
fact that
\[
\lim_{\theta\to\pi^{-}}\frac{\sin(m+1)\theta}{\sin(\theta)}=(m+1)(-1)^{m}.
\]
\end{proof}
With all the lemmas at our disposal, we produce the final arguments
to finish the proof of Theorem \ref{thm:cneq0}. We consider the function
$g_{m}(\theta)$ at the points 
\[
\theta_{h}=\frac{h\pi}{m+1}\in\left(\frac{2\pi}{3},\pi\right),\qquad\left\lfloor \frac{2(m+1)}{3}\right\rfloor +1\le h\le m.
\]
We note that the number of such values of $h$ is 
\[
m-\left\lfloor \frac{2(m+1)}{3}\right\rfloor =\left\lfloor \frac{m}{3}\right\rfloor ,
\]
where the equality can be checked by considering $m$ congruent to
$0,$1, or $2$ modulus $3$. From the formula of $g_{m}(\theta)$
in \eqref{eq:gthetaform} and Lemma \ref{lem:outsidedisk}, the sign
of $g_{m}(\theta_{h})$ is $(-1)^{h-1}$. By the Intermediate Value
Theorem and Lemma \ref{lem:singinterval}, there are at least $\left\lfloor m/3\right\rfloor -1$
zeros of $g_{m}(\theta)$ on $(2\pi/3,\pi)$. In fact, we claim that
there are at least $\left\lfloor m/3\right\rfloor $ zeros of $g_{m}(\theta)$
on $(2\pi/3,\pi)$. In the case $-1\le a<1/4$, we obtain one more
zero of $g_{m}(\theta)$ from Lemma \ref{lem:signend}. On the other
hand, if $1/4<a\le1/3$, then we obtain another zero of $g_{m}(\theta)$
by Lemma \ref{lem:singinterval}. From Lemmas \ref{lem:zmonotone}
and \ref{lem:zonto}, we obtain at least $\left\lfloor m/3\right\rfloor $
zeros of $H_{m}(z)$ on $I_{a}$. Since the degree of $H_{m}(z)$
is at most $\left\lfloor m/3\right\rfloor $ by Lemma \ref{degreelem},
all the zeros of $H_{m}(z)$ lie on $I_{a}$. Recall that we can ignore
the case $a=1/4$ by Lemma \ref{lem:densesubint}. The density of
${\displaystyle \bigcup_{m=0}^{\infty}\mathcal{Z}(H_{m}(z))}$ on
$I_{a}$ comes from the density of ${\displaystyle \bigcup_{m=0}^{\infty}\mathcal{Z}(g_{m}(\theta))}$
on $(2\pi/3,\pi)$ and from $z(\theta)$ being a continuous map.

\section{The case $c=0$ and $b\ge0$}

It is trivial that if $c=0$ and $b=0$ then the zeros of $H_{m}(z)$
are real under the convention that the constant zero polynomial is
hyperbolic. In the case $b>0$, we make a substitution $t\rightarrow t/\sqrt{b}$
and reduce the problem to the following theorem. 
\begin{thm}
\label{thm:ceq0}The zeros of the sequence of polynomials $H_{m}(z)$
generated by 
\begin{equation}
\sum_{m=0}^{\infty}H_{m}(z)t^{m}=\frac{1}{1+t^{2}+zt^{3}}\label{eq:genceq0}
\end{equation}
are real, and the set ${\displaystyle \bigcup_{m=0}^{\infty}\mathcal{Z}(H_{m})}$
is dense on $(-\infty,\infty)$. 
\end{thm}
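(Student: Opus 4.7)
The plan is to run the argument of Section~2 in parallel, replacing the cubic $1+t+at^{2}+zt^{3}$ by $1+t^{2}+zt^{3}$ and tracking the simplifications caused by the vanishing of the linear coefficient. Let $t_{0},t_{1},t_{2}$ denote the zeros of $1+t^{2}+zt^{3}$ and parametrize $t_{1}/t_{0}=e^{2i\theta}$, $t_{2}/t_{0}=\zeta e^{i\theta}$. The Vieta relations $t_{0}+t_{1}+t_{2}=-1/z$, $t_{0}t_{1}+t_{0}t_{2}+t_{1}t_{2}=0$, and $t_{0}t_{1}t_{2}=-1/z$ now contain a homogeneous middle equation: dividing it by $t_{0}^{2}e^{2i\theta}$ reduces it to
\[
1+2\zeta\cos\theta=0,
\]
so $\zeta(\theta)=-1/(2\cos\theta)$ replaces the formula \eqref{eq:zetaform}. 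Because the partial-fraction computation preceding \eqref{eq:gthetamotivation} depended only on the ratios $t_{k}/t_{0}$, the same function
\[
g_{m}(\theta)=\frac{(\zeta-\cos\theta)\sin(m+1)\theta}{\sin\theta}-\cos(m+1)\theta+\frac{1}{\zeta^{m+1}}
\]
again encodes the real zeros of $H_{m}(z)$ via $z=z(\theta)$. Combining the first and third Vieta relations as in \eqref{eq:ztheta} and fixing the branch compatible with the real root $t_{2}$ of the cubic gives
\[
z(\theta)=\frac{2\cos\theta}{(1-4\cos^{2}\theta)^{3/2}}.
\]

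The two structural changes from Section~2 are the working interval and the location of the asymptote. The inequality $|\zeta|>1$ becomes simply $|\cos\theta|<1/2$, so one should work on $\theta\in(\pi/3,2\pi/3)$ rather than on $(2\pi/3,\pi)$. A direct derivative computation (done most easily after the substitution $u=\cos\theta$) shows that $z'(\theta)<0$ throughout, and the limits $z(\pi/3^{+})=+\infty$, $z(\pi/2)=0$, $z(2\pi/3^{-})=-\infty$ make $z(\theta)$ a continuous strictly decreasing bijection from $(\pi/3,2\pi/3)$ onto $\mathbb{R}$. This enlarged image is precisely why density upgrades from a half-line to all of $(-\infty,\infty)$. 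The function $\zeta(\theta)$ has a single vertical asymptote at the interior point $\theta=\pi/2$, playing the role of $\cos^{-1}(-1/(2\sqrt{a}))$ in Lemma~\ref{lem:singinterval}.

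With these ingredients the endgame is parallel to Section~2. The analogue of Lemma~\ref{degreelem}, obtained from the recurrence $H_{m+3}+H_{m+1}+zH_{m}=0$ with initial values $H_{0}\equiv1$, $H_{1}\equiv0$, $H_{2}\equiv-1$ extracted from \eqref{eq:genceq0}, yields $\deg H_{m}\le\lfloor m/3\rfloor$. At the sample points $\theta_{h}=h\pi/(m+1)$ lying in $(\pi/3,2\pi/3)$ one has $\sin(m+1)\theta_{h}=0$ and $\cos(m+1)\theta_{h}=(-1)^{h}$, so $g_{m}(\theta_{h})=(-1)^{h+1}+\zeta^{-(m+1)}$ has sign $(-1)^{h-1}$ because $|\zeta|>1$; hence consecutive $\theta_{h}$ bracket a zero of $g_{m}$ by the Intermediate Value Theorem. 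The main obstacle is the subinterval containing the asymptote $\theta=\pi/2$: since $\zeta(\theta)\to\pm\infty$ on opposite sides of this point, one must adapt Lemma~\ref{lem:singinterval} with a case analysis on the residue of $m$ modulo $3$ to verify that this subinterval contributes an extra zero, bringing the total to exactly $\lfloor m/3\rfloor$. Matching this lower bound against the degree bound forces these to be all the zeros of $H_{m}$, and density of $\bigcup_{m=0}^{\infty}\mathcal{Z}(H_{m})$ on $\mathbb{R}$ follows from the density of $\{\theta_{h}\}$ in $(\pi/3,2\pi/3)$ together with the continuity of $z(\theta)$.
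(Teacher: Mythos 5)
Your setup is sound and runs parallel to the paper's: the degenerate middle Vieta relation forcing $\zeta(\theta)=-1/(2\cos\theta)$, the formulas for $z(\theta)$ and $g_{m}(\theta)$, and the strict monotonicity of $z(\theta)$ all match (the paper works on $(\pi/3,\pi/2)$ and doubles the count via the symmetry $H_{m}(-z)=(-1)^{m}H_{m}(z)$ instead of using the full interval $(\pi/3,2\pi/3)$, but that difference is cosmetic). The genuine gap is in the final count. First, the bound $\deg H_{m}\le\lfloor m/3\rfloor$ is not tight: since $H_{1}\equiv0$, induction on $H_{m+3}=-H_{m+1}-zH_{m}$ gives $\deg H_{m}\le(m-4)/3$ when $m\equiv1\pmod 3$ (indeed $H_{4}\equiv1$ and $H_{7}=-3z$), so your claimed total of ``exactly $\lfloor m/3\rfloor$'' real zeros is literally impossible in that residue class, and in the other classes matching against the loose bound $\lfloor m/3\rfloor$ only works if the count actually reaches the true degree. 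Second, for odd $m$ the polynomial $H_{m}$ has a simple zero at $z=0$, which corresponds to $\theta=\pi/2$ --- precisely where $\zeta$ blows up and the cubic \eqref{eq:genceq0} degenerates to $1+t^{2}$ --- so this zero can never be produced by a zero of $g_{m}$ and must be verified separately from the recurrence; your sketch omits it entirely.

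Third, the subinterval of your partition containing the asymptote $\pi/2$ does not reliably ``contribute an extra zero.'' For $m=6$ one has $g_{6}(\theta_{3})>0$ with $g_{6}\to+\infty$ as $\theta\to(\pi/2)^{-}$, and $g_{6}(\theta_{4})<0$ with $g_{6}\to-\infty$ as $\theta\to(\pi/2)^{+}$, so no sign change is forced on either side of the asymptote and that subinterval may contribute nothing; the two zeros $\pm1$ of $H_{6}=z^{2}-1$ are instead detected by the boundary behavior $g_{6}(\theta)\to-3$ as $\theta\to(\pi/3)^{+}$ and $g_{6}(\theta)\to3$ as $\theta\to(2\pi/3)^{-}$. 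Likewise $m=4$ gives two sample points of opposite sign straddling $\pi/2$ yet $H_{4}\equiv1$ has no zeros at all. So the endgame genuinely requires (i) the sharper degree lemma split by $m\bmod 3$, (ii) the separate treatment of $z=0$ for odd $m$, and (iii) a case analysis on $m\bmod 3$ and the parity of $m$ using the endpoint values of $g_{m}$ at $\pi/3$ (the analogue of Lemma \ref{lem:signend}) rather than the asymptote interval --- which is exactly the six-case bookkeeping the paper carries out to prove Theorem \ref{thm:ceq0}.
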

The proof of Theorem \ref{thm:ceq0} follows from a similar procedure
seen in Section 2. We will point out some key differences. The following
lemma comes directly from the recurrence relation 
\[
H_{m}(z)+H_{m-2}(z)+zH_{m-3}(z)=0
\]
and induction. 
\begin{lem}
The degree of the polynomial $H_{m}(z)$ generated by \eqref{eq:genceq0}
is at most 
\[
\begin{cases}
\frac{m}{3} & \text{ if }m\equiv0\pmod3\\
\frac{m-4}{3} & \text{ if }m\equiv1\pmod3\\
\frac{m-2}{3} & \text{ if }m\equiv2\pmod3.
\end{cases}
\]
\end{lem}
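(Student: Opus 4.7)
The plan is to proceed by strong induction on $m$ using the recurrence $H_m(z) = -H_{m-2}(z) - zH_{m-3}(z)$, which follows by equating coefficients of $t^m$ on both sides of $(1+t^2+zt^3)\sum_m H_m(z)t^m = 1$. For the base cases I would compute directly from this relation: $H_0 \equiv 1$, $H_1 \equiv 0$, $H_2 \equiv -1$, $H_3 \equiv -z$, $H_4 \equiv 1$, $H_5 \equiv 2z$, which already meet the stated bounds for each residue class modulo $3$ (with the convention that the zero polynomial has degree $-\infty$, so $H_1 \equiv 0$ is consistent with the bound $(1-4)/3 = -1$).

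For the inductive step, fix $m \geq 6$ and assume the bound holds for all smaller indices. The recurrence yields
\[
\deg H_m(z) \leq \max\bigl(\deg H_{m-2}(z),\, 1 + \deg H_{m-3}(z)\bigr),
\]
so I would split into three cases according to $m \bmod 3$. When $m \equiv 0 \pmod 3$, one has $m-2 \equiv 1$ and $m-3 \equiv 0$, giving the bound $\max((m-6)/3,\, 1 + (m-3)/3) = m/3$. When $m \equiv 1 \pmod 3$, one has $m-2 \equiv 2$ and $m-3 \equiv 1$, giving $\max((m-4)/3,\, 1 + (m-7)/3) = (m-4)/3$. When $m \equiv 2 \pmod 3$, one has $m-2 \equiv 0$ and $m-3 \equiv 2$, giving $\max((m-2)/3,\, 1 + (m-5)/3) = (m-2)/3$. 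In each case the two quantities being maxed are equal (or the first dominates), which explains why the stated bound is tight.

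There is essentially no obstacle here; the only subtle point is bookkeeping the residue classes so that the contributions of $H_{m-2}$ and $H_{m-3}$ line up correctly, and ensuring the base cases extend far enough ($m = 0,1,2,3,4,5$) to cover all three residues before the induction kicks in. Since the argument is purely formal and mirrors the analogous \emph{Lemma \ref{degreelem}} from Section 2, I would present it concisely as a case analysis after stating the recurrence, and defer routine verification of the base cases to the reader.
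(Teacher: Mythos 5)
Your proof is correct and follows exactly the route the paper takes, which simply cites the recurrence $H_{m}(z)+H_{m-2}(z)+zH_{m-3}(z)=0$ together with induction; your base cases and residue bookkeeping check out. (One trivial remark: in the case $m\equiv 0\pmod 3$ it is the second quantity $1+(m-3)/3=m/3$ that dominates, not the first, but this does not affect the argument.)
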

We define the functions 
\begin{equation}
\begin{aligned}\zeta(\theta) & =-\frac{1}{2\cos\theta},\\
g_{m}(\theta) & =\frac{-\sin(m+1)\theta}{2\cos\theta\sin\theta}\left(2+\cos2\theta\right)-\cos(m+1)\theta+(-2\cos\theta)^{m+1},\qquad\text{and}\\
z(\theta) & =\frac{2\cos\theta}{\sqrt{(1-4\cos^{2}\theta)^{3}}}
\end{aligned}
\label{eq:defsceq0}
\end{equation}
on the interval $(\pi/3,\pi/2)$.

The proof of the lemma below is similar to that of Lemma \ref{lem:zerosofP(t)}.
We leave the detailed computations to the reader. 
\begin{lem}
Suppose $\theta\in(\pi/3,\pi/2)$, $\zeta=\zeta(\theta)$, and $z=z(\theta)$
defined by \eqref{eq:defsceq0}. The three zeros of $1+t^{2}+z(\theta)t^{3}$
are
\[
t_{0}=-\frac{e^{-i\theta}}{z(2\cos\theta+\zeta)},
\]
\[
t_{1}=t_{0}e^{2i\theta},
\]

\[
t_{2}/t_{0}=\zeta e^{i\theta}.
\]
\end{lem}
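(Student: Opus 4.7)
The plan is to mirror the strategy of Lemma \ref{lem:zerosofP(t)} and verify directly that the three proposed values are the roots of $P(t):=1+t^{2}+z(\theta)t^{3}$. Rather than substitute each $t_{i}$ separately into $P$, I find it cleaner to note that since $P$ is cubic with leading coefficient $z$, the claim is equivalent to the three Vieta relations
\[
t_{0}+t_{1}+t_{2}=-\frac{1}{z},\qquad t_{0}t_{1}+t_{0}t_{2}+t_{1}t_{2}=0,\qquad t_{0}t_{1}t_{2}=-\frac{1}{z}.
\]

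Introduce the shorthand $\tau:=-1/(z(2\cos\theta+\zeta))$, so the three candidates take the symmetric form $t_{0}=\tau e^{-i\theta}$, $t_{1}=\tau e^{i\theta}$, $t_{2}=\zeta\tau$. The first relation is immediate: $t_{0}+t_{1}+t_{2}=\tau(2\cos\theta+\zeta)=-1/z$. The second one rearranges to $t_{0}t_{1}+(t_{0}+t_{1})t_{2}=\tau^{2}(1+2\zeta\cos\theta)$, which vanishes exactly because $\zeta=-1/(2\cos\theta)$ by the definition in \eqref{eq:defsceq0}. The third relation reduces to $\zeta\tau^{3}=-1/z$, i.e.\ $z^{2}(2\cos\theta+\zeta)^{3}=\zeta$; substituting $2\cos\theta+\zeta=(4\cos^{2}\theta-1)/(2\cos\theta)$ and $z^{2}=4\cos^{2}\theta/(1-4\cos^{2}\theta)^{3}$ produces an identity after cancelling $(1-4\cos^{2}\theta)^{3}$ against $-(4\cos^{2}\theta-1)^{3}$.

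The main obstacle, mild as it is, is the sign bookkeeping in this last computation. On $\theta\in(\pi/3,\pi/2)$ we have $\cos\theta>0$ and $1-4\cos^{2}\theta>0$, so $z(\theta)>0$ and $\sqrt{(1-4\cos^{2}\theta)^{3}}$ is unambiguously real and positive, while $4\cos^{2}\theta-1<0$ makes $(2\cos\theta+\zeta)^{3}$ negative; the resulting minus sign is precisely what turns $z^{2}(2\cos\theta+\zeta)^{3}$ into $-1/(2\cos\theta)=\zeta$. One should also verify that $2\cos\theta+\zeta\ne 0$ on the open interval so that $\tau$ is well defined, which follows from $4\cos^{2}\theta<1$. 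Once these signs are handled, the three Vieta identities hold and the lemma follows; as noted in the excerpt, the detailed algebra is routine and may be left to the reader.
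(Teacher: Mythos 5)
Your proof is correct, but it verifies the claim by a different computation than the paper's. The paper's template (the proof of Lemma \ref{lem:zerosofP(t)}, which this lemma is said to imitate) substitutes each candidate $t_{i}$ into $P(t)=1+t^{2}+z(\theta)t^{3}$ and checks $P(t_{i})=0$ using the identities satisfied by $\zeta$ and $z$; you instead check the three Vieta relations for the normalized cubic $z\,(t-t_{0})(t-t_{1})(t-t_{2})=zt^{3}+t^{2}+1$. The two are logically equivalent (given $z\ne0$, which holds since $z>0$ on $(\pi/3,\pi/2)$), and your route is arguably cleaner here: the symmetric functions $\tau(2\cos\theta+\zeta)$, $\tau^{2}(1+2\zeta\cos\theta)$, and $\tau^{3}\zeta$ collapse immediately using $\zeta=-1/(2\cos\theta)$, the middle one vanishing exactly because the $t$-coefficient of the cubic is zero. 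This is in fact the same elementary-symmetric-function bookkeeping the paper uses heuristically in Section 2.1 to \emph{derive} $z(\theta)$ and the equation for $\zeta$, so your argument makes that derivation rigorous rather than re-deriving it by substitution. Your sign analysis for the third relation ($z^{2}(2\cos\theta+\zeta)^{3}=\zeta$, with $1-4\cos^{2}\theta>0$ and $2\cos\theta+\zeta<0$ on the interval) is the one place where care is genuinely needed, and you handle it correctly, including the check that $2\cos\theta+\zeta\ne0$ so that $\tau$ is well defined.
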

Looking at $z'(\theta)$, one can check that $z(\theta)$ is strictly
decreasing on the interval $(\pi/3,\pi/2)$. By the partial fraction
decomposition of 
\[
\sum_{m=0}^{\infty}H_{m}(z)t^{m}=\frac{1}{1+t^{2}+zt^{3}}=\frac{1}{z(t-t_{0})(t-t_{1})(t-t_{2})}
\]
similar to the previous section, we conclude that for each zero of
$g_{m}(\theta)$ on the interval $(\pi/3,\pi/2)$ we obtain two zeros
$\pm z(\theta)$ of $H_{m}(z)$. We can also check by induction that
$z=0$ is a simple zero of $H_{m}(z)$ if $m$ is odd, and $z=0$
is not a zero of $H_{m}(z)$ when $m$ is even. The formula of $g_{m}(\theta)$
implies that the sign of this function alternates when $\cos(m+1)\theta=\pm1$,
that is, 
\[
(m+1)\theta=k\pi,\qquad\frac{m+1}{3}<k<\frac{m+1}{2}.
\]
 Since $g_{m}(\theta)$ is continuous on $(\pi/3,\pi/2)$, we may
apply the Intermediate Value Theorem to compute the number of zeros
of $g_{m}(\theta)$ on $(\pi/3,\pi/2)$ and correspond to the number
of zeros of $H_{m}(z)$ on $(-\infty,\infty)$. We will see that this
number is equal to the degree of $H_{m}(z)$ and Theorem \ref{thm:ceq0}
follows. We quickly summarize in the six cases below where $\theta^{*}$
denotes the smallest solution $(m+1)\theta=k\pi$ on the interval
$(\pi/3,\pi/2)$. 
\begin{enumerate}
\item $m\equiv1\pmod3$ and $m$ is even: there are 
\[
\frac{m}{2}-\frac{m+2}{3}=\frac{m-4}{6}
\]
 zeros of $g_{m}(\theta)$ on $(\pi/3,\pi/2)$, which gives $(m-4)/3$
zeros of $H_{m}(z)$ on $(-\infty,\infty)$.
\item $m\equiv1\pmod3$ and $m$ is odd: there are 
\[
\frac{m-1}{2}-\frac{m+2}{3}=\frac{m-7}{6}
\]
zeros of $g_{m}(\theta)$ on $(\pi/3,\pi/2)$ which gives$(m-7)/3$
nonzero roots of $H_{m}(z)$. We add a simple zero $z=0$ and obtain
$(m-4)/3$ zeros of $H_{m}(z)$ on $(-\infty,\infty)$. 
\item $m\equiv0\pmod3$ and $m$ is even: with the observation that $\lim_{\theta\rightarrow\pi/3}g_{m}(\theta)=-3<0$
and $g_{m}(\theta^{*})>0$ we obtain 
\[
\frac{m}{2}-\left(\frac{m}{3}+1\right)+1=\frac{m}{6}
\]
zeros of $g_{m}(\theta)$ on $(\pi/3,\pi/2)$, which gives $m/3$
zeros of $H_{m}(z)$ on $(-\infty,\infty)$. 
\item $m\equiv0\pmod3$ and $m$ is odd: with the observation that $\lim_{\theta\rightarrow\pi/3}g_{m}(\theta)=3>0$
and $g_{m}(\theta^{*})<0$ we obtain
\[
\frac{m-1}{2}-\left(\frac{m}{3}+1\right)+1=\frac{m-3}{6}
\]
zeros of $g_{m}(\theta)$ on $(\pi/3,\pi/2)$, which gives $(m-3)/3$
nonzero roots of $H_{m}(z)$. We add a simple zero $z=0$ and obtain
$m/3$ zeros of $H_{m}(z)$ on $(-\infty,\infty)$.
\item $m\equiv2\pmod3$ and $m$ is even: with the observation that $g_{m}(\pi/3)=0$,
$g_{m}'(\pi/3)>0$, and $g_{m}(\theta^{*})<0$ we obtain 
\[
\frac{m}{2}-\left(\frac{m+1}{3}+1\right)+1=\frac{m-2}{6}
\]
zeros of $g_{m}(\theta)$ on $(\pi/3,\pi/2)$, which gives $(m-2)/3$
zeros of $H_{m}(z)$ on $(-\infty,\infty)$.
\item $m\equiv2\pmod3$ and $m$ is odd: with the observation that $g_{m}(\pi/3)=0$,
$g_{m}'(\pi/3)<0$, and $g_{m}(\theta^{*})>0$ we obtain 
\[
\frac{m-1}{2}-\left(\frac{m+1}{3}+1\right)+1=\frac{m-5}{6}
\]
zeros of $g_{m}(\theta)$ on $(\pi/3,\pi/2)$, which gives $(m-5)/3$
nonzero roots of $H_{m}(z)$. We plus add simple zero $z=0$ and obtain
$(m-2)/3$ zeros of $H_{m}(z)$ on $(-\infty,\infty)$. 
\end{enumerate}
In all cases above the number of zeros of $H_{m}(z)$ on $(-\infty,\infty)$
corresponds to the degree of $H_{m}(z)$ and Theorem \ref{thm:ceq0}
follows.

\section{Necessary condition for the reality of zeros}

To prove the necessary condition of Theorem \ref{thm:mainstatement},
we first show that if $c=0$ and $b<0$ then not all polynomials $H_{m}(z)$
are hyperbolic. In fact, with the substitution $t$ by $it$, we conclude
that all the zeros of $H_{m}(z)$ will be purely imaginary by Theorem
\ref{thm:ceq0}.

It remains to consider the sequence $H_{m}(z)$ generated by 
\[
\sum_{m=0}^{\infty}H_{m}(z)t^{m}=\frac{1}{1+t+at^{2}+zt^{3}}
\]
and show that if $a\notin[-1,1/3]$ then there is an $m$ such that
not all the zeros of $H_{m}(z)$ are real. In fact, we will show if
$a\notin[-1,1/3]$, $H_{m}(z)$ is not hyperbolic for all large $m$.
To prove this, let us introduce some definitions (discussed in \cite{sokal})
related to the root distribution of a sequence of functions 
\[
f_{m}(z)=\sum_{k=1}^{n}\alpha_{k}(z)\beta_{k}(z)^{m}
\]
where $\alpha_{k}(z)$ and $\beta_{k}(z)$ are analytic in a domain
$D$. We say an index $k$ dominant at $z$ if $|\beta_{k}(z)|\ge|\beta_{l}(z)|$
for all $l$ ($1\le l\le n$). Let 
\[
D_{k}=\{z\in D:k\mbox{ is dominant at }z\}.
\]
Let $\liminf\mathcal{Z}(f_{m})$ be the set of all $z\in D$ such
that every neighborhood $U$ of $z$ has a non-empty intersection
with all but finitely many of the sets $\mathcal{Z}(f_{m})$. Let
$\limsup\mathcal{Z}(f_{m})$ be the set of all $z\in D$ such that
every neighborhood $U$ of $z$ has a non-empty intersection infinitely
many of the sets $\mathcal{Z}(f_{m})$. We will need the following
theorem from Sokal (\cite[Theorem 1.5]{sokal}). 
\begin{thm}
\label{sokal}Let $D$ be a domain in $\mathbb{C}$, and let $\alpha_{1},\ldots,\alpha_{n},\beta_{1},\ldots,\beta_{n}$
$(n\ge2)$ be analytic function on $D$, none of which is identically
zero. Let us further assume a 'no-degenerate-dominance' condition:
there do not exist indices $k\ne k'$ such that $\beta_{k}\equiv\omega\beta_{k'}$
for some constant $\omega$ with $|\omega|=1$ and such that $D_{k}$
$(=D_{k'})$ has nonempty interior. For each integer $m\ge0$, define
$f_{m}$ by 
\[
f_{m}(z)=\sum_{k=1}^{n}\alpha_{k}(z)\beta_{k}(z)^{m}.
\]
Then $\liminf Z(f_{m})=\limsup Z(f_{m})$, and a point $z$ lies in
this set if and only if either

(i) there is a unique dominant index $k$ at $z$, and $\alpha_{k}(z)=0$,
or

(ii) there a two or more dominant indices at $z$. 
\end{thm}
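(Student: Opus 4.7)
The plan is to analyze the asymptotic behavior of $f_m(z)$ after normalizing by a locally dominant exponential, and then separately prove the two inclusions that describe the common value of $\liminf \mathcal{Z}(f_m)$ and $\limsup \mathcal{Z}(f_m)$. Fix $z_0\in D$ and a dominant index $k_0$ at $z_0$. On a small neighborhood $U$ where $\beta_{k_0}$ does not vanish, write
\[
\frac{f_m(z)}{\beta_{k_0}(z)^m}=\alpha_{k_0}(z)+\sum_{k\ne k_0}\alpha_k(z)\gamma_k(z)^m,\qquad \gamma_k(z):=\beta_k(z)/\beta_{k_0}(z).
\]
Zeros of $f_m$ in $U$ coincide with zeros of this normalized function, so the whole problem reduces to understanding when the oscillating sum vanishes.

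For case (i), suppose $k_0$ is the \emph{unique} dominant index at $z_0$, so $|\gamma_k(z_0)|<1$ for $k\ne k_0$. By continuity, $|\gamma_k(z)|\le\rho<1$ on a small enough $U$, so the normalized function converges \emph{uniformly} on compacta of $U$ to $\alpha_{k_0}(z)$. If $\alpha_{k_0}(z_0)\ne0$, the normalized function is bounded away from $0$ on a neighborhood for all large $m$, showing $z_0\notin\limsup \mathcal{Z}(f_m)$. If $\alpha_{k_0}(z_0)=0$, Hurwitz's theorem forces zeros of $f_m$ to appear in every neighborhood of $z_0$ for all sufficiently large $m$, placing $z_0$ in $\liminf \mathcal{Z}(f_m)$. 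This handles the unique-dominance case completely and in particular forces $\liminf=\limsup$ on the open set where some index is strictly dominant.

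Case (ii) is the substantive core. At a point $z_0$ with dominant set $K=\{k:|\beta_k(z_0)|=\max_\ell|\beta_\ell(z_0)|\}$ of size $\ge 2$, the normalized function splits as $\sum_{k\in K}\alpha_k(z)\gamma_k(z)^m+(\text{terms}\to 0)$, and on $K$ one has $|\gamma_k(z_0)|=1$. The idea is to apply Rouch\'e on a shrinking circle around $z_0$: using smoothness of the $\alpha_k$ and $\gamma_k$ together with the hypothesis that no two $\gamma_k$ differ by a unimodular constant, one shows that for every large $m$ the argument of the dominant sum makes a nontrivial winding around $0$ on some small contour about $z_0$, producing a zero inside. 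The no-degenerate-dominance hypothesis is exactly what rules out the pathological situation where $\alpha_{k_1}\gamma_{k_1}^m+\alpha_{k_2}\gamma_{k_2}^m$ collapses to a single nonvanishing function on a whole open set; without it one could have two indices that are dominant on an open region yet the sum has no zeros there.

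The main obstacle is proving $\liminf=\limsup$ for case (ii), i.e.\ that \emph{every} point with two or more dominant indices lies in the $\liminf$, not only the $\limsup$. One must extract zeros for \emph{all} sufficiently large $m$, not just an unbounded sequence. I would argue this by an equidistribution-plus-Rouch\'e argument: parameterize small circles by a real angle, show the sum of dominant terms has an argument that cycles through a full $2\pi$-range as one traverses the circle (uniformly in $m$, using genericity of the frequency vector $(\arg\gamma_k(z_0))_{k\in K}$ supplied by no-degenerate-dominance), and deduce by the argument principle a positive number of zeros inside for every large $m$. The boundary behavior at points of $D$ where some $\alpha_k$ vanishes within $K$ would need a small additional perturbation argument, but the rest is routine once the oscillation estimate is set up.
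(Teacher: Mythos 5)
You should first note that the paper does not prove this statement at all: it is quoted verbatim as Theorem~1.5 of Sokal's paper \cite{sokal} and used as a black box, so there is no internal proof to compare against. Judged on its own, your treatment of the unique-dominance situation is correct and standard: normalizing by $\beta_{k_0}^m$, the tail is $O(\rho^m)$ with $\rho<1$ on a small neighborhood, so $\alpha_{k_0}(z_0)\ne0$ excludes $z_0$ from $\limsup\mathcal{Z}(f_m)$, while $\alpha_{k_0}(z_0)=0$ puts $z_0$ in $\liminf\mathcal{Z}(f_m)$ by Hurwitz (using that $\alpha_{k_0}\not\equiv0$).

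The gap is in case (ii), which is the entire substance of the theorem. First, the justification you offer for the oscillation estimate is not available: the no-degenerate-dominance hypothesis says only that no two $\beta_k$ are \emph{identically} unimodular multiples of one another on a region where the common dominance set has interior; it implies nothing about the ``frequency vector'' $(\arg\gamma_k(z_0))_{k\in K}$ at the single point $z_0$, which can perfectly well be $(0,\dots,0)$. Second, the proposed Rouch\'e argument on a small circle is not well posed: for $z$ on a circle around $z_0$, the indices in $K$ are generally no longer all dominant, so among the ``dominant'' terms some $|\gamma_k(z)|^m$ grow and some decay, and the sum $\sum_{k\in K}\alpha_k\gamma_k^m$ has no uniform structure on the contour; asserting that its argument winds nontrivially for every large $m$ is precisely the claim to be proved, not a consequence of smoothness. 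The actual mechanism (as in Sokal's proof) is the open mapping theorem applied to a nonconstant ratio $\gamma=\beta_k/\beta_{k'}$ with $|\gamma(z_0)|=1$: in any neighborhood of $z_0$ the image of $\gamma$ contains a disk about a unimodular point, hence $\gamma^m$ covers a full annulus about the unit circle for large $m$, and one solves $\gamma^m=-\alpha_{k'}/\alpha_k$ by a degree argument; separate arguments are needed when $\beta_k\equiv\omega\beta_{k'}$ on a dominance set with empty interior and when some $\alpha_k$ with $k\in K$ vanish at $z_0$. None of these steps is ``routine,'' and as written your sketch does not establish that points with two or more dominant indices lie in $\liminf\mathcal{Z}(f_m)$.
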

If $z^{*}\in\mathbb{C}$ such that the zeros in $t$ of $1+t+at^{2}+z^{*}t^{3}$
are distinct then by partial fractions given in \eqref{eq:partialfractions}
and Theorem \ref{sokal}, $z^{*}$ will belong to $\liminf\mathcal{Z}(H_{m})$
when the two smallest (in modulus) roots of $1+t+at^{2}+z^{*}t^{3}$
have the same modulus. We also note that $t_{0}(z)$, $t_{1}(z)$,
and $t_{2}(z)$ are analytic in a neighborhood of $z^{*}$ by the
Implicit Function Theorem. If we let $\omega=e^{2i\theta}$ then the
'no-degenerate-dominance' condition in Theorem \ref{sokal} comes
directly from equations \eqref{eq:ztheta} and \eqref{eq:zetaform}
since $\theta$ is a fixed constant (and thus $z$ is a fixed point
which has empty interior). 

Suppose $a\notin[-1,1/3]$. With the arguments in the previous paragraph,
our main goal is to find a $z^{*}\notin\mathbb{R}$ so that the zeros
of $1+t+at^{2}+z^{*}t^{3}$ are distinct and the two smallest (in
modulus) zeros of this polynomial have the same modulus. If we can
find such a point, then $z^{*}\in\liminf\mathcal{Z}(H_{m})=\limsup\mathcal{Z}(H_{m})$.
This implies that on a small neighborhood of $z^{*}$ which does not
intersect the real line, there is a non-real zero of $H_{m}(z)$ for
all large $m$ by the definition of $\liminf\mathcal{Z}(H_{m})$.
Our choice of $z^{*}=z(\theta^{*})$ comes from \eqref{eq:ztheta}
for a special $\theta^{*}$. Unlike in Section 2, $\theta^{*}$ will
not belong to $(2\pi/3,\pi)$ to ensure that $z^{*}\notin\mathbb{R}.$
In particular, we consider the two cases $a<-1$ and $a>3$.

\subsection*{Case $a<-1$. }

We will pick $\theta^{*}\ne\pi/2$ but sufficiently close to $\pi/2$
on the left. Since from \eqref{eq:zetaform}
\[
\lim_{\theta\to\pi/2}\zeta(\theta)=i\sqrt{\left|a\right|},
\]
we can pick $0<\theta^{*}<\pi/2$ sufficiently close to $\pi/2$ so
that $\zeta:=\zeta(\theta^{*})\in\mathbb{C}\backslash\mathbb{R}$
and $\left|\zeta(\theta^{*})\right|>1$. By Lemma \ref{lem:zerosofP(t)},
we have $t_{2}=\zeta t_{0}e^{i\theta^{*}}$ and $t_{1}=t_{0}e^{2i\theta^{*}}$.
The fact that $|\zeta|>1$ and $\theta^{*}\ne0,\pi/2$ implies that
the polynomial $1+t+at^{2}+z(\theta^{*})t^{3}$ have distinct zeros
and not all its zeros are real. We will show that $z(\theta^{*})\notin\mathbb{R}$
by contradiction. In deed, if $z(\theta^{*})\in\mathbb{R}$ then the
zeros of the polynomial $1+t+at^{2}+z(\theta^{*})t^{3}\in\mathbb{R}[t]$
satisfy $t_{0}=\overline{t_{1}}$ and
\[
t_{2}=t_{0}\zeta e^{i\theta^{*}}\in\mathbb{R}.
\]
This gives a contradiction because the first equation implies that
$t_{0}e^{i\theta^{*}}\in\mathbb{R}$, while the second equation implies
that $t_{0}e^{i\theta^{*}}\notin\mathbb{R}$ since $\zeta\notin\mathbb{R}$. 

\subsection*{Case $a>1/3$.}

We will pick $\theta^{*}$ so that $\cos\theta^{*}=\beta^{+}$ where
$\beta=\sqrt{a/(4a-1)}<1$. The definition of $\zeta(\theta)$ in
\eqref{eq:zetaform} gives
\[
\left|\lim_{\cos\theta\to\beta}\zeta(\theta)\right|=\begin{cases}
\left|\frac{\sqrt{4a^{2}-a}}{1-2a}\right| & \text{if }a\ne1/2\\
\infty & \text{if }a=1/2
\end{cases}
\]
where we can easily check that
\[
\left|\frac{\sqrt{4a^{2}-a}}{1-2a}\right|>1,\qquad a>1/3.
\]
Thus if $\cos\theta^{*}=\beta^{+}$ then $0<\theta^{*}<\pi/2$, $|\zeta(\theta^{*}|>1$,
and $|\zeta(\theta^{*})|\notin\mathbb{R}$ where the last statement
comes from \eqref{eq:zetaform} and the inequality 
\[
(1-4a)\cos^{2}\theta^{*}+a<0.
\]
With $0<\theta^{*}<\pi/2$, $|\zeta(\theta^{*}|>1$, and $|\zeta(\theta^{*})|\notin\mathbb{R}$,
we apply the same arguments given in the previous case to complete
this section. 

\subsection*{Acknowledgments }

The authors would like to thank Professor T. Forg\'acs for his careful
review of the paper.

\end{document}